\documentclass[11pt]{article}
\usepackage{amsfonts, amssymb, amsthm, amsmath, latexsym, bm}
\usepackage{enumerate, enumitem}
\usepackage{booktabs}
\usepackage{subfigure, epsfig, color, caption, graphicx}
\usepackage{float}
\usepackage{authblk, mathrsfs, indentfirst}
\usepackage{hyperref, cleveref}
\hypersetup{colorlinks=true, linkcolor=blue, filecolor=magenta, urlcolor=cyan}
\usepackage[a4paper, left=2.5cm, right=2.5cm, top=2.5cm, bottom=2.5cm]{geometry}
\allowdisplaybreaks
\numberwithin{equation}{section}
\newtheorem{theorem}{Theorem}
\newtheorem{lemma}{Lemma}[section]

\newtheorem{example}{Example}[section]

\newtheorem{remark}{Remark}[section]
\newcommand{\keywords}[1]{\small\textbf{\textit{Keywords---}}#1}

\title{The Stability of the Backward Problem for Photoacoustic Imaging in Attenuating Media via Carleman Estimates}

\date{}

\author[1]{Qihang Chen}
\author[2]{Zhiyuan Li\footnote{Corresponding author 1: lizhiyuan@nbu.edu.cn, supported by the National Natural Science Foundation of China (no. 12271277), Ningbo Youth Leading Talent Project (no. 2024QL045).}}
\author[3]{Song Xu}

\affil[1,2,3]{School of Mathematics and Statistics, Ningbo University, Ningbo 315211, China}

\begin{document}
	
\maketitle
	
\abstract{This paper investigates the backward problem in time for photoacoustic tomography (PAT) in attenuating media. It is well-established that photoacoustic imaging in attenuating media can be accurately modeled by spatial fractional-order damping. This inverse problem is ill-posed in the sense of Hadamard. In this work, we construct a novel class of Carleman estimates independent of spatial variables, and by virtue of these estimates, we establish conditional stability estimates for this problem for the first time. Building upon this, we propose a Tikhonov-type regularization functional and derive its associated adjoint system. Furthermore, leveraging the established conditional stability results, we derive the convergence rate of the proposed regularization approach. Finally, we validate the effectiveness of our theoretical findings through extensive numerical experiments. 
}

\keywords{Photoacoustic tomography, wave equation, nonlocal damping, Carleman estimate,  Tikhonov regularization}

\textbf{MSC2020:} 35R30, 35L05

\section{Introduction and main results}
The classical wave equation serves as the fundamental governing equation for photoacoustic tomography (PAT) in ideal non-attenuating media \cite{Ammari2010,Ren2015}. In practical PAT applications, however, acoustic attenuation is ubiquitous and cannot be neglected in high-precision image reconstruction, which necessitates the replacement of the classical wave equation with damped wave models. Damped wave equations have been widely adopted to characterize attenuating wave propagation across diverse physics and engineering disciplines \cite{hatano1998dispersive,kilbas2006theory,metzler2000random}. Traditional PAT models typically assume a velocity-proportional damping mechanism, which introduces a conventional \(\partial_t \Delta u\) damping term. Nevertheless, practical acoustic attenuation exhibits prominent frequency-dependent characteristics that generally follow a power-law distribution, as extensively verified in existing PAT and wave propagation studies \cite{cai2018fractional}.

To accurately capture such power-law frequency-dependent damping behavior, various modified wave models have been developed in the literature. Among these modeling strategies, the fractional Laplacian-based wave equation, which employs fractional powers of the Laplacian or general second-order uniformly elliptic operators, has emerged as the most effective and widely used framework \cite{ChenWen2005,10.1121/1.428630}. Against this backdrop, this paper investigates the inverse problem for PAT under frequency-dependent attenuation, covering both classical integer-order and generalized fractional-order damped wave imaging models. The core of this PAT inverse problem is to recover the unknown initial pressure distribution of the imaging medium from finite measured wave field data. Specifically, when wave field measurements are acquired over the entire computational domain \(\Omega\) at the fixed terminal time \(T\) \cite{2024arXiv240518082D}, the forward wave propagation problem is converted to a time-reversed backward inverse problem, which constitutes the key research object of this work.

Based on the above physical mechanism and modeling framework, we establish a generalized spatial damped wave equation to describe photoacoustic wave propagation in attenuating media. Let \(\Omega \subset \mathbb{R}^n\) denote a bounded open domain in the \(n\)-dimensional Euclidean space, and let \(T > 0\) be a fixed terminal observation time. We define the space-time cylinder for the imaging system as \(\Omega_T := \Omega \times (0, T)\). In this domain, we study a generalized multi-term damped wave equation with mixed integer- and fractional-order damping terms, formulated as follows:
\begin{equation}\label{eq-main}
\begin{cases}
\partial_t^2 u + (-\Delta)^\delta u + \partial_t (-\Delta)^\beta u + a(x,t) \partial_t (-\Delta)^{\gamma/2} u = 0, & (x,t) \in \Omega_T, \\
u(x, 0) = u_0(x), & x \in \Omega, \\
\partial_t u(x, 0) = 0, & x \in \Omega, \\
u(x,t) = 0, & (x,t) \in (\mathbb{R}^n \setminus \Omega) \times (0, T),
\end{cases}
\end{equation}
where the coefficient \(a \in L^\infty(\Omega_T)\) is a predefined bounded function defined on the space-time domain \(\Omega_T\). The exponents \(\delta, \beta, \gamma\) are non-negative real numbers in $[0,1]$. For fractional-order modeling scenarios, we restrict these exponents to the interval \((0, 1)\) to satisfy the physical definition of fractional differential operators.

The space-fractional Laplace operator $(-\Delta)^\delta$ for any non-negative real number $\delta$ is defined as follows (Riesz fractional Laplacian, \cite{Lis_202010}):

\[
(-\Delta)^\delta u(x) = C_{n,\delta} \, \text{P.V.} \int_{\mathbb{R}^n} \frac{u(x) - u(y)}{|x-y|^{d+2\delta}} \, dy, \quad x \in \Omega
\]
with the standard normalization constant
\[
C_{n,\delta} = \frac{2^{2\delta} \delta \Gamma\left(\frac{d+2\delta}{2}\right)}{\pi^{d/2} \Gamma(1-\delta)}.
\]
All fractional Laplacian operators in \eqref{eq-main} follow this unified definition.  When all exponents take integer values, the proposed model degenerates to the classical integer-order damped wave equation, with the domain constraint simplifying to the standard homogeneous Dirichlet boundary condition \(u|_{\partial \Omega}=0\).

The generalized damped wave equation \eqref{eq-main} studied in this paper can be reduced to various classical PDE models by taking different values of the parameters $\gamma, \beta, \delta$, and related research has achieved abundant results. The following will systematically sort out the existing research progress according to parameter classification.

When $\delta=\beta=1$ and $a=0$, model \eqref{eq-main} reduces to the classical strongly damped wave equation. 
\begin{equation}
\partial_t^2 u -\Delta u - \partial_t \Delta u  = 0.
\end{equation}
For the forward problem with initial data, existing works have established criteria for global existence/non-existence and uniqueness/non-uniqueness, and systematically studied the basic properties of solutions such as asymptotic behavior and regularity. Among them, the global existence and asymptotic behavior of strong solutions were established by Webb \cite{webb1980existence}, the blow-up analysis results can be found in Ono and Ohta \cite{ono1997global,ohta1998remarks}, and the threshold for existence and non-existence of solutions was given by Ma et al. \cite{ma2018energy} with nonlinear source term. In the field of inverse problem research, Tuan et al. \cite{TUAN201769} discussed the backward problem for this strongly damped wave equation and first proved its ill-posedness in the sense of Hadamard.

When $\beta\in(0,1)$, equation \eqref{eq-main} corresponds to the Chen–Holm model \cite{chen2004fractional},
\begin{equation}
    \partial_t^2 u -\Delta u + \partial_t (-\Delta)^\beta u  = 0.
\end{equation}
 When $\beta\in(0,1/2)$, Carvalho et al. \cite{carvalho2002attractors} established the global solvability of the forward problem; for the homogeneous case, DaLuz et al. \cite{da_luz2015asymptotic} and Karch \cite{karch2000selfsimilar} characterized the long-time asymptotic behavior of global solutions in various norms through Fourier analysis methods. In  \cite{Kaltenbacher_2021}, Kaltenbacher et al. further studied the initial value inversion problem based on trace observations in the time and space fractional cases, established stability results, and proposed numerical solution algorithms.

Regarding the backward problem for this equation in the general case, Song et al. \cite{SONG2024177} conducted a preliminary exploration and gave a qualitative analysis of stability and instability in the general case, but did not establish a rigorous quantitative conditional stability theory. Even in the classical case of the strongly damped wave equation, existing studies \cite{TUAN201769} have failed to provide universal stability estimates. Therefore, there remains a significant theoretical gap in the study of conditional stability for the backward problem of generalized fractional damped wave equations.

As a typical ill-posed problem, the backward problem has been fully studied in parabolic systems \cite{isakov2017inverse}. For the backward problem of time-fractional wave equations, the conditional stability can be found in the work of Yamamoto et al. \cite{Cen2025, Chorfi12082024, Floridia_2020,doi:10.1137/22M1529105,wen2023solving}, and existing methods are mostly based on eigenvalue expansion techniques. Yamamoto \cite{Yamamoto_2009} first introduced the Carleman estimate method into the study of backward problems for parabolic equations, providing a powerful tool for the stability analysis of ill-posed problems. Subsequently, Zhang et al. \cite{zhang2025conditional} and Jia et al. \cite{JIA20181} applied this method to backward problems in dynamic cardiac electrophysiology modeling and space-fractional diffusion equations, respectively, but these equations are essentially variants of parabolic equations. Inspired by this, this paper extends the Carleman estimate method to generalized space-fractional damped wave equations and establishes corresponding theoretical results by constructing adapted exponential weight functions and integration by parts. 

Before giving the main results, we recall that \( H^k(0,T; X) \) is the Bochner--Sobolev space of \( X \)-valued functions with \( k \)-th order time weak derivatives in \( L^2(0,T; X) \); \( H^s(\Omega) \) is the Sobolev space of order \( s\ge0 \); \( L^\infty(\Omega_T) \) denotes essentially bounded functions on \( \Omega_T = \Omega\times(0,T) \). The more details for these function spaces are postponed in Section \ref{sec2}. We are now ready to state the first main theorem.
\begin{theorem}\label{thm3}
Let $u \in H^2(0,T; H^{2\beta}(\Omega) \cap H_0^\beta(\Omega))$ be a solution to \eqref{eq-main} and satisfy the following a priori regularity condition:
\[
\|u\|_{H^{1}(0,T;H^{2\beta}(\Omega))}\leq M.
\]
If $\delta \leq \beta$, $\gamma \leq \beta$ and $a\in L^{\infty}(\Omega_T)$, then for any $t_0 \in (0,T)$, there exist constants $\theta_1 \in (0,1)$ and $C>0$, depending on $t_0$, $T$, $\Omega$, $\lambda$ and $\beta$, such that
\[
\|u(\cdot, t_0)\|_{L^2(\Omega)}+\|\partial_t u(\cdot,t_0)\|_{L^2(\Omega)} \leqslant C M^{1-\theta_1} \left(\|u(\cdot, T)\|_{H^{2\beta}(\Omega)}+\|\partial_t u(\cdot, T)\|_{H^\beta(\Omega)}\right)^{\theta_1},
\]
where $\theta_1=\frac{e^{\lambda t_0}-e^{\lambda t_2}}{e^{\lambda T}-e^{\lambda t_2}}$, and $t_2$ is a parameter satisfying $t_2 \leq t_0$.
\end{theorem}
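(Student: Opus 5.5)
We use a Carleman estimate in the time variable only, with weight $\varphi(t)=e^{\lambda t}$. The spatial operators are fractional powers of the Dirichlet Laplacian, which we denote $A=(-\Delta)$ on $\Omega$. They are self-adjoint and nonnegative, so they can be handled by energy-type integration by parts in $L^2(\Omega)$, without any spatial weight.

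\emph{Step 1: reduce to a first-order-in-time inequality.} We set $v=\partial_t u$ and rewrite \eqref{eq-main} as
\[
\partial_t v + A^\beta v = -A^\delta u - a A^{\gamma/2} v .
\]
Since $\delta\le\beta$ and $\gamma\le\beta$, interpolation together with Poincaré's inequality gives
\[
\|A^\delta u\|\le C\|A^\beta u\|, \qquad \|aA^{\gamma/2}v\|\le C\|a\|_\infty\|A^{\beta/2}v\| .
\]
Hence the right-hand side is dominated by lower-order terms relative to $A^\beta$. We thus treat the problem as a parabolic-type backward problem for the pair $(u,v)$, with $\partial_t u=v$.

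\emph{Step 2: time-weighted Carleman estimate.} We fix $t_2<t_0$. We take a cutoff $\chi(t)$ equal to $1$ on $[t_0,T]$ and vanishing near $t_2$, and set $w=\chi u$, $z=\chi v$. For the weight $e^{s\varphi}$ with $\varphi=e^{\lambda t}$ (the "Carleman estimate independent of spatial variables"), we multiply the equation for $z$ by $e^{2s\varphi}z$ and by $e^{2s\varphi}A^\beta z$, and integrate over $\Omega\times(t_2,T)$. The term $\int e^{2s\varphi}\langle\partial_t z,z\rangle$ produces, after integration by parts in $t$, the boundary term at $T$ and the positive bulk term
\[
s\lambda\int e^{2s\varphi}\varphi\|z\|^2 .
\]
The sign must be arranged so that the boundary data at $T$ control the interior; equivalently, we run the estimate with the reversed-time weight. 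The coercive term $\int e^{2s\varphi}\|A^{\beta/2}z\|^2$ absorbs $aA^{\gamma/2}z$, and the coupling $A^\delta w$ is absorbed using $\|A^\beta w\|$ together with $w(t)=w(T)-\int_t^T z$. The commutator terms $\chi' u$ and $\chi' v$ are supported in $(t_2,t_0)$ and are bounded by $M^2 e^{2s e^{\lambda t_1}}$, where $t_1<t_0$ is the edge of the cutoff support. The output is
\[
e^{2s e^{\lambda t_0}}\big(\|u(t_0)\|^2+\|v(t_0)\|^2\big)\le C e^{2s e^{\lambda T}}\big(\|u(T)\|_{H^{2\beta}}^2+\|v(T)\|_{H^\beta}^2\big)+C e^{2s e^{\lambda t_2}}M^2 .
\]
To obtain the pointwise-in-$t_0$ bound, we integrate the identity from $t_0$ to $T$ instead of over the whole strip.

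\emph{Step 3: optimize in $s$.} We divide the output by $e^{2s e^{\lambda t_0}}$, write $D$ for the data norm, and choose $s$ to balance the two terms, namely
\[
e^{2s(e^{\lambda T}-e^{\lambda t_2})}=M^2/D^2 .
\]
This yields $CM^{1-\theta_1}D^{\theta_1}$ with
\[
\theta_1=\frac{e^{\lambda t_0}-e^{\lambda t_2}}{e^{\lambda T}-e^{\lambda t_2}} .
\]
If $D\ge M$, the estimate is trivial.

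The main obstacle is Step 2. The system is hyperbolic-like in $(u,v)$, and the coupling $A^\delta u$ is not a small perturbation of $A^\beta v$; it is of comparable order when $\delta=\beta$. To close the estimate, one needs a combined energy $\|A^{\delta/2}u\|^2+\|v\|^2$ whose weighted time derivative has a favorable sign, plus a large-$\lambda$ choice so that the $\lambda s\varphi$ terms dominate the $\|a\|_\infty$ and interpolation constants. We would first try the multiplier $e^{2s\varphi}(v+\epsilon A^\beta u)$, which mimics the strongly damped energy, and tune $\epsilon$ and $\lambda$.
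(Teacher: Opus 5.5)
\textbf{There is a genuine gap in Step 2.} This step carries the whole proof, and as written it contains no working mechanism. Multiplying $\partial_t z+A^\beta z=F$ (with $F$ collecting $-A^\delta w$, $-aA^{\gamma/2}z$ and $\chi' v$) by $e^{2s\varphi}z$ and using $z(0)=0$ gives
\[
\tfrac12 e^{2s\varphi(T)}\|z(T)\|^2-s\lambda\int_0^T\varphi e^{2s\varphi}\|z\|^2\,dt+\int_0^T e^{2s\varphi}\|A^{\beta/2}z\|^2\,dt=\int_0^T e^{2s\varphi}\langle F,z\rangle\,dt .
\]
The bulk term therefore enters with a minus sign, on the same side as the data. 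This identity bounds $\|A^{\beta/2}z\|^2$ by $s\lambda\varphi\|z\|^2$; it does not bound $\|z\|^2$ by the data. The multiplier $e^{2s\varphi}A^\beta z$ has the same defect one level up.

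Reversing the weight is no way out: the entire gain comes from $\varphi(t_2)<\varphi(t_0)<\varphi(T)$, which your own output inequality uses. The fallback in your last paragraph also cannot work. A weighted energy (e.g.\ built from $e^{2s\varphi}(v+\epsilon A^\beta u)$) whose time derivative has a sign for every solution would give Lipschitz backward stability with no role for $M$. That is false here: the fast modes decay like $e^{-\mu_k^\beta t}$, with $\mu_k$ the eigenvalues of $A$. In a Carleman estimate, positivity only appears after integrating in time.

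The missing ingredient is the paper's Lemma~\ref{carlmeanfordiff}. Square the conjugated operator $e^{s\varphi}(\partial_t+A^\beta)e^{-s\varphi}$, or equivalently use the single multiplier $e^{2s\varphi}(A^\beta z-s\lambda\varphi z)$. This gives the exact identity
\[
\int_0^T e^{2s\varphi}\|A^\beta z-s\lambda\varphi z\|^2\,dt+\frac{s\lambda^2}{2}\int_0^T\varphi e^{2s\varphi}\|z\|^2\,dt=\int_0^T e^{2s\varphi}\langle F,A^\beta z-s\lambda\varphi z\rangle\,dt-\frac12\Bigl[e^{2s\varphi}\bigl(\|A^{\beta/2}z\|^2-s\lambda\varphi\|z\|^2\bigr)\Bigr]_{0}^{T}.
\]
The cross term $\langle\partial_t z,A^\beta z\rangle$ contributes only boundary terms because $A$ is time-independent, and the good term comes from differentiating the weight. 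This controls $s\lambda^2\varphi\|z\|^2$ and $\frac{1}{s\varphi}(\|\partial_t z\|^2+\|A^\beta z\|^2)$. Only then does your $z$-multiplier identity, multiplied by $\lambda$, yield $\lambda\|A^{\beta/2}z\|^2$, which for large $\lambda$ absorbs $\|a\|_\infty^2\|A^{\gamma/2}z\|^2$.

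\textbf{The $\delta=\beta$ obstacle is not one.} Once that estimate is in hand, your own device $w(t)=w(T)-\int_t^T\partial_t w$ resolves it; this is the paper's Lemma~\ref{lem:terminal}. For $h=A^\beta w$,
\[
s\lambda\int_0^T\varphi e^{2s\varphi}\|h\|^2\,dt\le e^{2s\varphi(T)}\|h(T)\|^2+\frac{1}{s\lambda}\int_0^T\frac{e^{2s\varphi}}{\varphi}\|\partial_t h\|^2\,dt .
\]
Hence $\int e^{2s\varphi}\|A^\delta w\|^2$ is bounded by $\frac{C}{s\lambda}e^{2s\varphi(T)}\|w(T)\|_{H^{2\beta}(\Omega)}^2$ plus $\frac{C}{s\lambda^2}\int\frac{1}{s\varphi}e^{2s\varphi}\|A^\beta\partial_t w\|^2$. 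The latter is absorbed for large $s$ whether $\delta<\beta$ or $\delta=\beta$, so no combined energy is needed.

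\textbf{Two smaller repairs.} First, your cutoff as described ($\chi=1$ only on $[t_0,T]$, transition in $(t_2,t_0)$) puts the commutators where the weight reaches $e^{2s\varphi(t_0)}$, which destroys the decay. You need $\chi\equiv1$ on $[t_2,T]$ with the transition in $(t_1,t_2)$ and $t_2<t_0$; this is what produces $e^{2s\varphi(t_2)}M^2$ and the stated $\theta_1$. Second, integrating the identity only over $(t_0,T)$ leaves $\frac12 e^{2s\varphi(t_0)}\|A^{\beta/2}z(t_0)\|^2$ uncontrolled on the right. Instead, as in the paper, bound the weighted integrals of $u$, $\partial_t u$, $\partial_t^2u$ over $(t_0,T)$, and recover the values at $t_0$ from $\|g(t_0)\|^2=\|g(T)\|^2-2\int_{t_0}^T\langle g,\partial_t g\rangle\,dt$.

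Your Step 3 is fine.
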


In terms of numerical algorithms, \cite{hou2026modified,LIU201884} conducted numerical experiments of Tikhonov regularization for the backward problem of the heat equation, and Cen et al. \cite{Cen2025} obtained the convergence estimate of Tikhonov regularization using a special scattered observation. For wave equations, we usually need to consider the simultaneous inversion of velocity and displacement, which significantly increases the difficulty of the problem. Based on the established conditional stability results, this paper constructs the corresponding Tikhonov regularization functional, analyzes the convergence of the algorithm, and gives the convergence rate for this problem
\begin{theorem}
\label{thm:main}
Assume the conditional stability estimate holds and that the true solution satisfies $\|u^\dagger\|_{H^1(0,T;H^{2\beta}(\Omega))} \le M$.  
Let $v_0 = (u, \partial_t u)$ denote the state taking values in $L^2(\Omega) \times L^2(\Omega)$, and $v_0^\dagger$ stand for the exact historical state corresponding to the true solution when $t_0\in(0,T)$.
Here $\sigma>0$ denotes the noise level of the observed data.
In addition, we restrict to the special case $\beta = \delta$ and set $a = 0$. Choose the regularization parameter as $\alpha = c\sigma^2$ with an arbitrary constant $c>0$.  
Then there exists a constant $C>0$, depending only on $M$, $c$ and the chosen $\epsilon$, such that
\[
\|v_0^{\alpha,\sigma} - v_0^\dagger\|_{L^2(\Omega) \times L^2(\Omega)} \le C\,\sigma^{\theta},
\]
where $\theta = \theta_1\frac{\epsilon}{\beta+\epsilon}\in(0,1)$.
\end{theorem}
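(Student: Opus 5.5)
The plan is the standard one for Tikhonov regularization under conditional stability: first bound the discrepancy and the a priori norm of the regularized solution, then feed the error into Theorem \ref{thm3}, and finally convert the data-space error into the norms required there by interpolation, which is where the exponent $\frac{\epsilon}{\beta+\epsilon}$ comes from.

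First fix the functional. With noisy terminal data $g^\sigma=(u^\sigma_T,u^\sigma_{1,T})$ satisfying $\|g^\sigma-g^\dagger\|_{L^2\times L^2}\le\sigma$, set
\[
J_\alpha(v)=\|F v-g^\sigma\|^2_{L^2(\Omega)\times L^2(\Omega)}+\alpha\|v\|^2_{\mathcal X},
\]
where $F$ is the linear map from the state at time $t_0$ to the state at time $T$. With $\beta=\delta$, $a=0$, this map is diagonal in the Dirichlet eigenbasis of $(-\Delta)^\beta$, and $\mathcal X$ is the penalty norm. Here $\|v\|_{\mathcal X}$ is chosen so that the forward solution $u$ issued from $v$ obeys $\|u\|_{H^1(0,T;H^{2\beta})}\lesssim\|v\|_{\mathcal X}$, and so that $F$ maps $\mathcal X$ boundedly into $H^{2\beta+2\epsilon}\times H^{\beta+2\epsilon}$. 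This smoothing is available because of the strong damping $\partial_t(-\Delta)^\beta$: for each eigenvalue $\mu_k$ the ODE $\ddot w+\mu_k^\beta\dot w+\mu_k^\beta w=0$ can be solved explicitly, so the gain is visible directly from the eigenfunction expansion. Comparing $J_\alpha(v^{\alpha,\sigma})\le J_\alpha(v_0^\dagger)$ gives
\[
\|Fv^{\alpha,\sigma}-g^\sigma\|^2+\alpha\|v^{\alpha,\sigma}\|_{\mathcal X}^2\le\sigma^2+\alpha M^2 .
\]
With $\alpha=c\sigma^2$ this yields $\|v^{\alpha,\sigma}\|_{\mathcal X}\le C(M,c)$ and $\|F(v^{\alpha,\sigma}-v_0^\dagger)\|_{L^2\times L^2}\le C\sigma$.

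Next take the difference $w=u^{\alpha,\sigma}-u^\dagger$. It solves \eqref{eq-main} on $(t_0,T)$, and its a priori norm $\|w\|_{H^1(0,T;H^{2\beta})}$ is bounded by a constant $M'$. Applying Theorem \ref{thm3}, after a time shift so that $t_0$ plays the role of the interior time, gives
\[
\|v^{\alpha,\sigma}-v_0^\dagger\|_{L^2\times L^2}\le C M'^{1-\theta_1}\big(\|w(T)\|_{H^{2\beta}}+\|\partial_tw(T)\|_{H^\beta}\big)^{\theta_1}.
\]
The right-hand side involves stronger norms than the $L^2$ discrepancy, so interpolation is needed. From the previous step, $\|w(T)\|_{H^{2\beta+2\epsilon}}\le C$, and interpolating between $L^2$ and $H^{2\beta+2\epsilon}$ gives
\[
\|w(T)\|_{H^{2\beta}}\le C\|w(T)\|_{L^2}^{\frac{\epsilon}{\beta+\epsilon}}\|w(T)\|_{H^{2\beta+2\epsilon}}^{\frac{\beta}{\beta+\epsilon}}\le C\sigma^{\frac{\epsilon}{\beta+\epsilon}} .
\]
For $\partial_tw(T)$, interpolating between $L^2$ and $H^{\beta+2\epsilon}$ gives the exponent $\frac{2\epsilon}{\beta+2\epsilon}$, which is larger, so $\sigma^{\frac{\epsilon}{\beta+\epsilon}}$ dominates since $\sigma<1$. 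Raising to the power $\theta_1$ gives the claimed rate $\sigma^{\theta}$ with $\theta=\theta_1\frac{\epsilon}{\beta+\epsilon}$.

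The main obstacle is the regularity bookkeeping: one must show that the regularized state and the true state both generate solutions bounded in $H^1(0,T;H^{2\beta})$ and with terminal data bounded in $H^{2\beta+2\epsilon}\times H^{\beta+2\epsilon}$. I would settle this first through the explicit spectral solution. The relevant estimate is $\mu_k^{\beta+\epsilon}e^{-c\mu_k^\beta(T-t_0)}\le C$ on overdamped modes, while the finitely many underdamped modes are harmless. From these bounds one reads off an $\mathcal X$-norm strong enough to control $M'$ but weak enough to contain $v_0^\dagger$. If this cannot be done with $\mathcal X=L^2$, I would take $\mathcal X$ to be the norm $\|u\|_{H^1(t_0,T;H^{2\beta})}$ of the induced forward solution, in which case the bound $\|v_0^\dagger\|_{\mathcal X}\le M$ is exactly the hypothesis.
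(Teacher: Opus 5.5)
There is a genuine gap: your skeleton (Tikhonov bounds, then Theorem \ref{thm3} applied to the difference, then interpolation against a smoothing bound) matches the paper's, but the regularity-gain step it rests on is false. You assume overdamped modes decay like $e^{-c\mu_k^\beta(T-t_0)}$. Take an eigenvalue $\nu=\nu_k\ge 4$ of $(-\Delta)^\beta$ with normalized eigenfunction $\phi_k$. The mode equation $\ddot w+\nu\dot w+\nu w=0$ has roots $r_\pm=\frac12\bigl(-\nu\pm\sqrt{\nu^2-4\nu}\bigr)$. Only $r_-\approx-\nu$ is fast; the other root $r_+=\nu/r_-$ lies in $[-2,-1)$ and tends to $-1$.

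The data $(\phi_k,r_+\phi_k)$ at $t_0$ have $L^2\times L^2$ norm of order one and give $u(t)=e^{r_+(t-t_0)}\phi_k$. Hence $\|u(T)\|_{H^s}\ge e^{-2(T-t_0)}\|\phi_k\|_{H^s}$ uniformly in $k$, so the displacement gains no spatial regularity. This breaks both of your options for the penalty space.
\begin{itemize}
\item With $\mathcal X=L^2\times L^2$ you get neither $\|u\|_{H^1(t_0,T;H^{2\beta})}\lesssim\|v\|_{\mathcal X}$ (so no $M'$) nor boundedness of $F$ into $H^{2\beta+2\epsilon}$.
\item With your fallback $\mathcal X$ (the $H^1(t_0,T;H^{2\beta})$ norm of the forward solution), $\|w(T)\|_{H^{2\beta}}$ is only bounded, not small. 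Interpolating against $\|w(T)\|_{L^2}\le C\sigma$ then controls $H^s$ only for $s<2\beta$, never the $H^{2\beta}$ norm that Theorem \ref{thm3} requires.
\end{itemize}
The rate can only be recovered by assuming and penalizing extra regularity, e.g.\ $u^\dagger(\cdot,t_0)\in H^{2\beta+2\epsilon}(\Omega)$. That is not among the hypotheses. Moreover, any penalty stronger than $L^2$ defines a different estimator from the paper's $v_0^{\alpha,\sigma}$, which minimizes the $L^2\times L^2$-penalized functional.

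Following the paper does not repair this, because it asserts the same smoothing via Lemma \ref{semigroup}. Analyticity only places the semigroup $S_B(t)$ into every $D(\mathcal A^n)$ of its generator $\mathcal A$. With $B=A$, the vectors $(\phi_k,r_+\phi_k)$ are eigenvectors of $\mathcal A$ with eigenvalues in $[-2,-1)$, so membership in $D(\mathcal A^n)$ gives no extra Sobolev regularity of $u$.

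There is a second, smaller gap. $u^{\alpha,\sigma}$ is generated forward from $t_0$, so $w$ exists only on $(t_0,T)$. A time shift makes $t_0$ the \emph{initial} time, not an interior one. Theorem \ref{thm3} says nothing at the left endpoint of the interval carrying the a priori bound, since its cut-off needs $0<t_1<t_2<t_0$. As written, you can only conclude at times strictly after $t_0$, unless $w$ extends backward past $t_0$ with an $H^1(H^{2\beta})$ bound, which the regularized state does not provide. The paper's proof passes over this point in the same way.
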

\begin{remark}
The parameter $\epsilon$ depends on the spatial regularity of the solution. As $\epsilon$ increases, $\theta$ approaches $\theta_1$, leading to a better convergence rate. However, the constant $C$ depends on $\epsilon$ and grows as $\epsilon$ increases, which in turn enlarges the error bound.
\end{remark}

In summary, the main contributions of this paper are as follows:
\begin{enumerate}
\item The Carleman estimate for generalized fractional damped wave equations is established for the first time.
\item Based on the above Carleman estimate, the conditional stability of the backward problem for this equation is rigorously proved under a priori regularity assumptions.
\item According to the established conditional stability, the Tikhonov regularization functional is constructed, and the convergence of the algorithm is analyzed using conditional stability, giving the convergence rate for this problem for the first time.
\end{enumerate}

The remainder of this paper is structured as follows. In Section~\ref{sec2}, we recall the necessary function spaces and preliminary lemmas, including a weighted integral inequality that plays a crucial role in the Carleman estimate. Section~\ref{sec3} is devoted to the derivation of two Carleman estimates for the strongly damped wave operator with space‑fractional damping, which serve as the main analytical tools for conditional stability. Based on these estimates, Section~\ref{sec4} rigorously proves a conditional stability estimate for the backward problem, where the stability exponent depends explicitly on the observation time $t_0$. In Section~\ref{sec5}, we formulate the Tikhonov regularization for the ill‑posed backward problem, derive the gradient via the adjoint method, and analyze the convergence rate under a suitable parameter choice rule, exploiting the analytic smoothing effect of the semigroup. Section~\ref{sec6} presents numerical experiments that validate the theoretical findings and investigate the influence of the fractional order, the initial time $t_0$, and the noise level on the reconstruction accuracy. Finally, Section~\ref{sec7} concludes the paper and discusses possible directions for future research.

\section{Preliminaries}\label{sec2}
Throughout this paper, we adopt the notations introduced above and assume that the bounded domain $\Omega\subset\mathbb{R}^n$ has a sufficiently smooth boundary. We first specify the standard function spaces used throughout our analysis.

The standard $L^2$ spaces on $\Omega$ and $\Omega_T$ are denoted by $L^2(\Omega)$ and $L^2(\Omega_T)$, respectively. For $\beta>0$, let $H^\beta(\Omega)$ be the fractional Sobolev space equipped with the norm
\[
\|v\|_{H^\beta(\Omega)} = \bigl(\|v\|_{L^2(\Omega)}^2 + |v|_{H^\beta(\Omega)}^2\bigr)^{1/2},
\]
where $|\cdot|_{H^\beta(\Omega)}$ is the Gagliardo semi-norm
\[
|v|_{H^\beta(\Omega)} = \Bigl(\int_\Omega\int_\Omega
\frac{|v(y)-v(x)|^2}{|x-y|^{n+2\beta}}\,dy\,dx\Bigr)^{1/2}.
\]
For a Banach space $X$, $C([0,T];X)$ denotes the space of continuous functions from $[0,T]$ into $X$, $L^2(0,T;X)$ the space of square Bochner integrable functions, and $H^1(0,T;X)$ the space of functions whose weak derivative also belongs to $L^2(0,T;X)$.

When functions are extended by zero outside $\Omega$ (corresponding to the homogeneous Dirichlet volume constraint in our model), the above semi-norm is equivalent to the full $H^\beta(\Omega)$-norm. We will frequently use the identity
\[
\|(-\Delta)^{\beta/2} v\|_{L^2(\mathbb{R}^n)}^2 = C_{n,\beta}\,
\int_{\mathbb{R}^n}\int_{\mathbb{R}^n}
\frac{|v(y)-v(x)|^2}{|x-y|^{n+2\beta}}\,dy\,dx
\]
with a normalization constant $C_{n,\beta}>0$.

Beyond the function spaces and operator identities above, we state a key integral lemma that will be used in the derivation of the Carleman estimate and the proof of conditional stability for our inverse problem.
\begin{lemma}
\label{lem:terminal}
Let $\lambda>0$, $s>0$, and define the weight function $\varphi(t)=e^{\lambda t}$ on $[0,T]$.
For any $w\in L^2(\Omega_T)$, set
\[
v(x,t)=\int_0^t w(x,\tau)\,d\tau.
\]
Then the following estimate holds:
\[
\begin{aligned}
&\int_{\Omega_T} \varphi(t) e^{2s\varphi(t)}\Bigl|\int_0^t w(x,\tau)\,d\tau\Bigr|^{2}\,dxdt
\\
\le& \frac{e^{2s\varphi(T)}}{s\lambda}\int_\Omega\Bigl|\int_0^T w(x,\tau)\,d\tau\Bigr|^{2}\,dx
   + \frac{1}{s^{2}\lambda^{2}}\int_{\Omega_T} \frac{e^{2s\varphi(t)}}{\varphi(t)}|w(x,t)|^{2}\,dxdt.
\end{aligned}
\]
\end{lemma}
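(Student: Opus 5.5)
The estimate is pointwise in $x$, so I will fix $x\in\Omega$, prove a one-dimensional inequality in $t$, and then integrate over $\Omega$. Write $v(t)=v(x,t)=\int_0^t w(x,\tau)\,d\tau$, so $v(0)=0$, $v'=w$ and $v(T)=\int_0^T w\,d\tau$. Since $\varphi'=\lambda\varphi$, we have
\[
\frac{d}{dt}e^{2s\varphi(t)}=2s\lambda\varphi(t)e^{2s\varphi(t)}.
\]
Hence the left-hand integrand $\varphi e^{2s\varphi}|v|^2$ equals $\frac{1}{2s\lambda}\bigl(e^{2s\varphi}\bigr)'|v|^2$, which suggests integrating by parts in $t$.

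\textbf{Step 1 (integration by parts).} For a.e.\ $x$, $v(x,\cdot)\in H^1(0,T)$, and the boundary term at $t=0$ vanishes because $v(0)=0$. This gives
\[
\int_0^T \varphi e^{2s\varphi}|v|^2\,dt=\frac{e^{2s\varphi(T)}}{2s\lambda}|v(T)|^2-\frac{1}{s\lambda}\int_0^T e^{2s\varphi}v\,w\,dt .
\]

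\textbf{Step 2 (absorbing the cross term).} Split the cross term with the weights $\varphi^{1/2}$ and $\varphi^{-1/2}$ and apply Young's inequality:
\[
\frac{1}{s\lambda}\Bigl|\int_0^T e^{2s\varphi}v\,w\,dt\Bigr|\le \frac12\int_0^T\varphi e^{2s\varphi}|v|^2\,dt+\frac{1}{2s^2\lambda^2}\int_0^T\frac{e^{2s\varphi}}{\varphi}|w|^2\,dt .
\]
Absorb the first term into the left-hand side and multiply by $2$. This yields
\[
\int_0^T \varphi e^{2s\varphi}|v|^2\,dt\le \frac{e^{2s\varphi(T)}}{s\lambda}|v(T)|^2+\frac{1}{s^2\lambda^2}\int_0^T \frac{e^{2s\varphi}}{\varphi}|w|^2\,dt,
\]
which has exactly the constants in the statement.

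\textbf{Step 3 (integration in $x$).} Integrate over $\Omega$ and use Fubini to conclude.

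\textbf{Main obstacle.} The only delicate point is justifying the absorption in Step 2, which needs the left-hand side to be finite. This holds because the weights are bounded on $[0,T]$ and $|v(t)|^2\le T\int_0^T|w|^2\,d\tau$, so the left side is finite whenever $w\in L^2(\Omega_T)$. The integration by parts itself is valid because $v(x,\cdot)$ is absolutely continuous for a.e.\ $x$; alternatively, one can argue by density of smooth $w$.
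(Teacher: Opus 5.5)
Your proof is correct and follows the paper's argument. Both reach the identity $\int_{\Omega_T}\varphi e^{2s\varphi}v^2\,dxdt=\frac{e^{2s\varphi(T)}}{2s\lambda}\int_\Omega v(\cdot,T)^2\,dx-\frac{1}{s\lambda}\int_{\Omega_T}e^{2s\varphi}vw\,dxdt$ and then close with the same weighted Young inequality (the paper's choice $\alpha=1/(s\lambda)$) and absorption. The differences are that you get the identity by one direct integration by parts, while the paper detours through the auxiliary function $k(t)$ and a cancellation of the $e^{2s\varphi(0)}$ terms, and that your finiteness check for the absorption supplies a justification the paper leaves implicit.
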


\begin{proof}
From $\varphi'=\lambda\varphi$, we have
\begin{equation}
\frac{d}{dt}\bigl[e^{2s\varphi(t)}\bigr] = 2s\lambda\,\varphi(t)e^{2s\varphi(t)}.
\label{eq:deriv_weight}
\end{equation}
Since $v_t=w$, the product rule gives
\[
\partial_t\bigl(e^{2s\varphi} v^2\bigr) = 2s\lambda\varphi e^{2s\varphi} v^2 + 2e^{2s\varphi} v w.
\]
Define $k(t)$ by $\frac{d}{dt}\bigl[k(t)e^{2s\varphi(t)}\bigr] = \varphi(t)e^{2s\varphi(t)}$.
Using \eqref{eq:deriv_weight} we see that the right‑hand side equals $\frac{1}{2s\lambda}\frac{d}{dt}[e^{2s\varphi}]$.
Integrating from $0$ to $t$ with $k(0)e^{2s\varphi(0)}=0$ yields
\begin{equation}
k(t)e^{2s\varphi(t)} = \int_0^t \varphi(\tau)e^{2s\varphi(\tau)}\,d\tau
= \frac{1}{2s\lambda}\bigl(e^{2s\varphi(t)} - e^{2s\varphi(0)}\bigr).
\label{eq:theta_explicit}
\end{equation}
Now differentiating $k e^{2s\varphi}v^2$ and using the definition of the function $k(t)$, we see that
\[
\partial_t\bigl(k e^{2s\varphi} v^2\bigr)
= \varphi e^{2s\varphi} v^2 + 2k e^{2s\varphi} v w.
\]
Integrate over $\Omega_T$ and apply the fundamental theorem of calculus, and noting $v(x,0)=0$, we arrive at the identity
\[
\int_{\Omega_T} \partial_t\bigl(k e^{2s\varphi} v^2\bigr)\,dxdt
= \int_\Omega k(T)e^{2s\varphi(T)}v(x,T)^2\,dx.
\]
Using \eqref{eq:theta_explicit} at $t=T$,
\[
k(T)e^{2s\varphi(T)} = \frac{1}{2s\lambda}\bigl(e^{2s\varphi(T)} - e^{2s\varphi(0)}\bigr).
\]
Hence
\[
\int_\Omega k(T)e^{2s\varphi(T)}v(\cdot,T)^2\,dx
= \frac{1}{2s\lambda}\bigl(e^{2s\varphi(T)}-e^{2s\varphi(0)}\bigr)\int_\Omega v(\cdot,T)^2dx,
\]
which combined with the last two equalities, we obtain
\begin{equation}
\frac{1}{2s\lambda}\bigl(e^{2s\varphi(T)}-e^{2s\varphi(0)}\bigr)\int_\Omega v(\cdot,T)^2\,dx
= \int_{\Omega_T} \varphi e^{2s\varphi} v^2\,dxdt + 2\int_{\Omega_T} k e^{2s\varphi} v w \,dxdt.
\label{eq:main_identity}
\end{equation}
Substituting \eqref{eq:theta_explicit} into the last term of \eqref{eq:main_identity}, we obtain the following:
\[
2\int_{\Omega_T} k e^{2s\varphi} v w\,dxdt
= \frac{1}{s\lambda}\int_{\Omega_T} e^{2s\varphi} v w\,dxdt
  - \frac{e^{2s\varphi(0)}}{s\lambda}\int_{\Omega_T} v w\,dxdt.
\]
Since $w=v_t$ and $v(x,0)=0$,
\[
\int_{\Omega_T} v w\,dxdt = \int_\Omega\int_0^T \tfrac12\partial_t(v^2)\,dt\,dx
= \frac12\int_\Omega v(\cdot,T)^2\,dx,
\]
which further implies
\[
2\int_{\Omega_T} k e^{2s\varphi} v w\,dxdt
= \frac{1}{s\lambda}\int_{\Omega_T} e^{2s\varphi} v w\,dxdt
  - \frac{e^{2s\varphi(0)}}{2s\lambda}\int_\Omega v(\cdot,T)^2\,dx.
\]
Plugging this into \eqref{eq:main_identity} cancels the $e^{2s\varphi(0)}$ terms and yields
\begin{equation}
\int_{\Omega_T} \varphi e^{2s\varphi} v^2\,dxdt
= \frac{e^{2s\varphi(T)}}{2s\lambda}\int_\Omega v(\cdot,T)^2\,dx
  - \frac{1}{s\lambda}\int_{\Omega_T} e^{2s\varphi} v w\,dxdt.
\label{eq:before_I}
\end{equation}
Set $I := \int_{\Omega_T} \varphi(t) e^{2s\varphi(t)} v(x,t)^2\,dxdt$, then \eqref{eq:before_I} becomes
\begin{equation}
I = \frac{e^{2s\varphi(T)}}{2s\lambda}\int_\Omega v(\cdot,T)^2\,dx
    - \frac{1}{s\lambda}\int_{\Omega_T} e^{2s\varphi} v w\,dxdt.
\label{eq:I_eq}
\end{equation}
Now we bound the second term.  Using $-vw \le |vw|$ and the weighted Young inequality
\begin{equation}\label{youngeq}
    |vw| \le \frac{\alpha}{2}\frac{w^2}{\varphi} + \frac{1}{2\alpha}\varphi v^2, \quad \alpha>0,
\end{equation}
we obtain
\[
\begin{aligned}
- \frac{1}{s\lambda}\int_{\Omega_T} e^{2s\varphi} v w\,dxdt
\le& \frac{1}{s\lambda}\int_{\Omega_T} e^{2s\varphi} |v w|\,dxdt\\
\le& \frac{1}{s\lambda}\left( \frac{\alpha}{2}\int_{\Omega_T} \frac{e^{2s\varphi}}{\varphi} w^2\,dxdt
   + \frac{1}{2\alpha}\int_{\Omega_T} \varphi e^{2s\varphi} v^2\,dxdt \right).
\end{aligned}
\]
Substituting this estimate into \eqref{eq:I_eq}, we obtain:
\[
I \le \frac{e^{2s\varphi(T)}}{2s\lambda}\int_\Omega v(\cdot,T)^2\,dx
    + \frac{\alpha}{2s\lambda}\int_{\Omega_T} \frac{e^{2s\varphi}}{\varphi} w^2\,dxdt
    + \frac{1}{2\alpha s\lambda} I.
\]
Choose $\alpha = \frac{1}{s\lambda}$.  Then $\frac{1}{2\alpha s\lambda} = \frac12$, and the inequality becomes
\[
I \le \frac{e^{2s\varphi(T)}}{2s\lambda}\int_\Omega v(\cdot,T)^2\,dx
    + \frac{1}{2s^2\lambda^2}\int_{\Omega_T} \frac{e^{2s\varphi}}{\varphi} w^2\,dxdt
    + \frac12 I.
\]
Subtract $\frac12 I$ and multiply by $2$:
\[
I \le \frac{e^{2s\varphi(T)}}{s\lambda}\int_\Omega v(\cdot,T)^2\,dx
    + \frac{1}{s^2\lambda^2}\int_{\Omega_T} \frac{e^{2s\varphi}}{\varphi} w^2\,dxdt.
\]
Recalling the definition of $I$ and $v(x,t)=\int_0^t w(x,\tau)\,d\tau$ gives the desired estimate. We complete the proof of the lemma.
\end{proof}
\begin{remark}\label{remark1}
    Since $\varphi(t) \geq 1$, if we use the Young inequality \eqref{youngeq} without the weight function $\varphi$, that is,
    \begin{equation}
    |vw| \le \frac{\alpha}{2}w^2 + \frac{1}{2\alpha} v^2 \qquad (\alpha>0),
\end{equation}
then the estimate becomes to 
\begin{equation}\label{remarkeq}
\begin{aligned}
&\int_{\Omega_T}e^{2s\varphi(t)}\Bigl|\int_0^t w(x,\tau)\,d\tau\Bigr|^{2}\,dxdt\\
\le& \frac{e^{2s\varphi(T)}}{s\lambda}\int_\Omega\Bigl|\int_0^T w(x,\tau)\,d\tau\Bigr|^{2}\,dx
   + \frac{1}{s^{2}\lambda^{2}}\int_{\Omega_T} e^{2s\varphi(t)}|w(x,t)|^{2}\,dxdt,    
\end{aligned}
\end{equation}
which is important to the following proof.

\end{remark}

\section{Carleman Estimates}\label{sec3}
In this section, we turn to considering a Carleman estimate with a weight function that depends only on time:
\[
\varphi(t)=e^{\lambda t},\qquad \lambda>0.
\] 
Define the fractional strongly damped wave operator
\[
\mathscr{P}_\beta(z)=\partial_t^2 z + (-\Delta)^{\delta} z + (-\Delta)^\beta \partial_t z.
\] 
The following Carleman estimates are the main tools for establishing conditional stability. We begin with the Carleman estimate for $\partial_t u+(-\Delta)^{\beta}u$.

\begin{lemma}\label{carlmeanfordiff}
There exists \( \lambda_0 > 0 \) such that for any \( \lambda \geq \lambda_0 \), there exists \( s_0(\lambda) > 0 \) satisfying: for all \( s > s_0 \) and all solutions \( u \) satisfying the homogeneous Dirichlet/Neumann volume constraint conditions and belonging to the regularity space
$u \in C([0, T]; H^\beta(\Omega)) \cap H^1(0, T; L^2(\Omega)) \cap L^2(0, T; H^{2\beta}(\Omega))$,
there exists a constant \( C = C(s_0, \lambda_0,T,\Omega,\beta) > 0 \) such that
\[
\begin{aligned}
& \int_{\Omega_T} \left\{ \frac{1}{s\varphi}\left(|\partial_t u|^2 + |(-\Delta)^\beta u|^2\right) + s\lambda^2\varphi |u|^2 +\lambda \left| (-\Delta)^{\beta/2} u \right|^2 \right\} e^{2s\varphi} \, dxdt  \\
& \leq C \int_{\Omega_T} |\partial_t u + (-\Delta)^\beta u|^2 e^{2s\varphi} \, dxdt + Cs\lambda e^{  2s\varphi(T)}\left( \varphi(T)\|u(\cdot, T)\|_{H^\beta(\Omega)}^2 + \|u(\cdot, 0)\|_{H^\beta(\Omega)}^2 \right).
\end{aligned}
\]    
\end{lemma}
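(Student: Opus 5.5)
The plan is the standard conjugation-and-splitting argument for a parabolic-type operator, with a weight depending only on $t$. Set $z=e^{s\varphi}u$, so that $u=e^{-s\varphi}z$ and
\[
e^{s\varphi}\bigl(\partial_t u+(-\Delta)^\beta u\bigr)=\partial_t z+(-\Delta)^\beta z - s\lambda\varphi z =: f .
\]
Since $\varphi$ does not depend on $x$, the fractional Laplacian commutes with the weight. This is the key simplification: no commutator terms in space appear at all. Write $f=A z+B z$ with
\[
Az=(-\Delta)^\beta z - s\lambda\varphi z \quad(\text{symmetric part}),\qquad Bz=\partial_t z \quad(\text{antisymmetric part}).
\]
Expanding $\|f\|^2_{L^2(\Omega_T)}=\|Az\|^2+\|Bz\|^2+2(Az,Bz)$, the main work is computing the cross term.

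For the cross term, the homogeneous volume constraint and self-adjointness of $(-\Delta)^\beta$ on functions vanishing outside $\Omega$ give
\[
2\int_{\Omega_T}(-\Delta)^\beta z\,\partial_t z = \Bigl[\|(-\Delta)^{\beta/2}z\|^2_{L^2}\Bigr]_0^T .
\]
Integrating by parts in $t$ gives
\[
-2\int_{\Omega_T}s\lambda\varphi z\,\partial_t z = \int_{\Omega_T}s\lambda^2\varphi |z|^2 - \Bigl[s\lambda\varphi\|z\|^2_{L^2}\Bigr]_0^T .
\]
So $\|f\|^2$ controls $\|\partial_t z\|^2+\|Az\|^2+s\lambda^2\int\varphi|z|^2$, up to boundary terms at $t=0,T$. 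At $t=T$ these are bounded by $s\lambda\varphi(T)e^{2s\varphi(T)}\|u(T)\|^2_{H^\beta}$. At $t=0$ they appear with a favorable sign or are bounded by $Cs\lambda e^{2s\varphi(0)}\|u(0)\|^2_{H^\beta}\le Cs\lambda e^{2s\varphi(T)}\|u(0)\|^2_{H^\beta}$. The fractional norms of $z(T)$ are equivalent to $\|u(T)\|_{H^\beta}$ by the Gagliardo identity of Section~\ref{sec2}.

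Next I recover the individual terms. The $\lambda\|(-\Delta)^{\beta/2}z\|^2$ term I would obtain as follows: multiply $Az$ by $\lambda z$ (or take a second identity pairing $f$ with $\lambda z$), which gives
\[
\lambda\int\bigl|(-\Delta)^{\beta/2}z\bigr|^2 = \lambda\int (Az) z + s\lambda^2\int\varphi|z|^2 .
\]
The first term on the right is absorbed by Cauchy--Schwarz, $\lambda|(Az,z)|\le \tfrac{1}{s\varphi}|Az|^2\text{-type}+s\lambda^2\varphi|z|^2$. The second term is the dangerous one, because it has the same size as the positive term obtained above.

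I expect this to be the main obstacle: the mixed sign of $s\lambda\varphi z$ inside $A$. I would handle it by instead pairing $f$ against $\frac{1}{s\varphi}(\cdot)$ weights, i.e. estimating $\frac{1}{s\varphi}|(-\Delta)^\beta z|^2\le \frac{2}{s\varphi}|Az|^2+2s\lambda^2\varphi|z|^2$. I would also multiply the original identity by a small constant and use Lemma~\ref{lem:terminal}/Remark~\ref{remark1} in the form $z(t)=z(T)-\int_t^T\partial_t z$ to bound $\int\varphi|z|^2$ by terminal data plus $\frac{1}{s^2\lambda^2}\int|\partial_t z|^2$. Choosing $\lambda\ge\lambda_0$ and then $s\ge s_0(\lambda)$ makes the absorptions legitimate.

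Finally, I return to $u$. We have $\partial_t u\,e^{s\varphi}=\partial_t z-s\lambda\varphi z$, so
\[
\frac{1}{s\varphi}|\partial_t u|^2e^{2s\varphi}\le \frac{2}{s\varphi}|\partial_t z|^2+2s\lambda^2\varphi|z|^2 ,
\]
and both pieces are already controlled. Also $(-\Delta)^\beta u\,e^{s\varphi}=(-\Delta)^\beta z$ and $(-\Delta)^{\beta/2}u\,e^{s\varphi}=(-\Delta)^{\beta/2}z$. Collecting the terms gives the stated inequality, with $C$ depending only on $s_0,\lambda_0,T,\Omega,\beta$.
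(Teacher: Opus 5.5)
Your overall scheme is the paper's: conjugate with $e^{s\varphi}$, expand $\|f\|^2$, evaluate the cross term with the Gagliardo identity and integration by parts in $t$, pair with $\lambda z$ for the seminorm, and convert back to $u$. The problem is the step you single out as the main obstacle. It is not an obstacle, and the remedy you propose for it fails. The term $s\lambda^2\int_{\Omega_T}\varphi|z|^2$ on the right of your $\lambda$-identity need not be absorbed into anything, because your first estimate already bounds it:
\[
\|\partial_t z\|^2+\|Az\|^2+s\lambda^2\int_{\Omega_T}\varphi|z|^2\le\|f\|^2+\mathcal{B},
\]
where $\mathcal{B}$ denotes the admissible boundary terms at $t=0,T$. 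For $s\ge1$ you also have $\lambda|(Az,z)|\le\frac12\|Az\|^2+\frac{\lambda^2}{2}\|z\|^2\le\frac12\|Az\|^2+\frac{s\lambda^2}{2}\int_{\Omega_T}\varphi|z|^2$. Inserting both into your identity gives $\lambda\|(-\Delta)^{\beta/2}z\|^2\le\frac32(\|f\|^2+\mathcal{B})$ at once. The cancellation you fear only occurs if the two estimates are added with equal weights. Adding the $\lambda$-estimate to twice the first one removes it; equivalently, multiply the $\lambda$-identity, not the first identity, by a small constant. This is precisely what the paper does when it inserts \eqref{eq:prelim_est1} into \eqref{eq:sem_norm_pre}.

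Drop the detour through Lemma \ref{lem:terminal}; it cannot work. The $\frac{1}{s^2\lambda^2}$ gain there comes from the weight $e^{2s\varphi}$ acting on an unconjugated function that vanishes at $t=0$, whereas here $u(\cdot,0)$ need not vanish.

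\begin{itemize}
\item Applied to the unweighted $z$, the bound you state is false. Take $z=(T-t)g(x)$: then $z(\cdot,T)=0$ and $\int_{\Omega_T}\varphi|z|^2$ is independent of $s$, while $\frac{1}{s^2\lambda^2}\int_{\Omega_T}|\partial_t z|^2\to0$.
\item Applied to $u$ with $w=\partial_t u$ and rewritten through $e^{s\varphi}\partial_t u=\partial_t z-s\lambda\varphi z$, its right-hand side contains $\frac{1}{s^2\lambda^2}\int_{\Omega_T}\varphi^{-1}|s\lambda\varphi z|^2=\int_{\Omega_T}\varphi|z|^2$ with coefficient one. The bound is therefore circular.
\end{itemize}

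Your inequality $\frac{1}{s\varphi}|(-\Delta)^\beta z|^2\le\frac{2}{s\varphi}|Az|^2+2s\lambda^2\varphi|z|^2$ is correct, but it handles the final $(-\Delta)^\beta u$ term, not the seminorm.

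With this correction your proof closes. Keeping $\|Az\|^2$ and pairing $Az$ rather than the full conjugated operator with $\lambda z$ is even slightly cleaner than the paper. It avoids the time-boundary term produced by pairing $\partial_t z$ with $z$, and it lets you bound $(-\Delta)^\beta z$ directly from $\|Az\|^2$.
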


The proof of this Lemma is similar to \cite{JIA20181} and \cite{Yamamoto_2009}.

\begin{proof}
Let \( L(u) := \partial_t u + (-\Delta)^\beta u \) and define the exponential weight function \( \varphi(t) := e^{\lambda t} \) for a sufficiently large parameter \( \lambda > 0 \). Introduce the conjugate transformation:
\[
v(x,t) := e^{s\varphi(t)} u(x,t),
\]
where \( s > 0 \) is another large parameter to be chosen later. A direct computation yields the conjugated operator:
\[
Pv := e^{s\varphi} L(e^{-s\varphi} v) = \partial_t v - s\lambda \varphi(t) v + (-\Delta)^\beta v = e^{s\varphi} f,
\]
with \( f = L(u) \). By the homogeneous Dirichlet/Neumann volume constraint conditions on \( u \), it follows that \( v \) satisfies the same boundary conditions.

By the non-negativity of the square, we expand \( \|Pv\|_{L^2(\Omega_T)}^2 \) as:
\[
\begin{aligned}
\left\| e^{s\varphi} f \right\|_{L^2(\Omega_T)}^2 &= \int_{\Omega_T} |\partial_t v|^2 dxdt + 2\int_{\Omega_T} \partial_t v \left( -s\lambda \varphi v + (-\Delta)^\beta v \right) dxdt \\
&\quad + \int_{\Omega_T} \left| -s\lambda \varphi v + (-\Delta)^\beta v \right|^2 dxdt \\
&\geq \int_{\Omega_T} |\partial_t v|^2 dxdt + 2\int_{\Omega_T} \partial_t v (-\Delta)^\beta v dxdt - 2s\lambda \int_{\Omega_T} \varphi v \partial_t v dxdt.
\end{aligned}
\]
Denote the two cross terms as:
\[
I_1 := 2\int_{\Omega_T} \partial_t v (-\Delta)^\beta v dxdt, \quad I_2 := -2s\lambda \int_{\Omega_T} \varphi v \partial_t v dxdt.
\]
We thus obtain two key inequalities:
\begin{equation}\label{eq:key_ineq1}
\int_{\Omega_T} |f|^2 e^{2s\varphi} dxdt \geq I_1 + I_2,
\end{equation}
\begin{equation}\label{eq:key_ineq2}
\int_{\Omega_T} |\partial_t v|^2 dxdt \leq \int_{\Omega_T} |f|^2 e^{2s\varphi} dxdt + |I_1 + I_2|.
\end{equation}

We first estimate the term \( I_1 \). We use the integration by parts formula for the fractional Laplacian under volume constraints: for any functions \( v,w \) satisfying the homogeneous Dirichlet/Neumann volume condition on \( \mathbb{R}^n \setminus \Omega \),
\[
\int_{\Omega} v (-\Delta)^\beta w dx = \frac{1}{2} \int_{\mathbb{R}^n} \int_{\mathbb{R}^n} \frac{(v(y)-v(x))(w(y)-w(x))}{|x-y|^{n+2\beta}} dydx,
\]
where the boundary integral over \( \mathbb{R}^n \setminus \Omega \) vanishes due to the volume constraints. Applying this to \( I_1 \):
\[
\begin{aligned}
I_1 &= \int_0^T \int_{\mathbb{R}^n} \int_{\mathbb{R}^n} \frac{\partial_t \left[ v(y,t)-v(x,t) \right]^2}{|x-y|^{n+2\beta}} dydxdt \\
&= \left. \int_{\mathbb{R}^n} \int_{\mathbb{R}^n} \frac{|v(y,t)-v(x,t)|^2}{|x-y|^{n+2\beta}} dydx \right|_{t=0}^{t=T}.
\end{aligned}
\]
Recall the equivalence between the nonlocal double integral and the fractional Sobolev semi-norm:
\[
\left\| (-\Delta)^{\beta/2} v(\cdot,t) \right\|_{L^2(\mathbb{R}^n)}^2 = C_{n,\beta} \int_{\mathbb{R}^n} \int_{\mathbb{R}^n} \frac{|v(y,t)-v(x,t)|^2}{|x-y|^{n+2\beta}} dydx,
\]
where \( C_{n,\beta} > 0 \) is a normalization constant depending only on \( n \) and \( \beta \). Therefore,
\begin{equation}\label{eq:I1_est}
|I_1| \leq C_1 \left( \|v(\cdot,T)\|_{H^\beta(\Omega)}^2 + \|v(\cdot,0)\|_{H^\beta(\Omega)}^2 \right),
\end{equation}
where \( C_1 = C_1(n,\beta) > 0 \) is a generic constant independent of \( s \) and \( \lambda \).

Next, we estimate the term \( I_2 \). Integrating \( I_2 \) by parts with respect to time, we obtain

\begin{equation}\label{eq:I2_est}
\begin{aligned}
I_2 &= s\lambda \int_{\Omega_T} v^2 \partial_t \varphi dxdt - s\lambda \left. \int_{\Omega} \varphi v^2 dx \right|_{t=0}^{t=T} \\
&= s\lambda^2 \int_{\Omega_T} \varphi v^2 dxdt - s\lambda \int_{\Omega} \left( \varphi(T) |v(x,T)|^2 - |v(x,0)|^2 \right) dx
\\
&\geq s\lambda^2 \int_{\Omega_T} \varphi v^2 dxdt - s\lambda  \left( \varphi(T)\|v(\cdot,T)\|_{L^2(\Omega)}^2 + \|v(\cdot,0)\|_{L^2(\Omega)}^2 \right).
\end{aligned}
\end{equation}
Substituting \eqref{eq:I1_est} and \eqref{eq:I2_est} into \eqref{eq:key_ineq1}, we obtain:
\begin{equation}\label{eq:prelim_est1}
\begin{aligned}
\left\| f e^{s\varphi} \right\|_{L^2(\Omega_T)}^2 &\geq s\lambda^2 \int_{\Omega_T} \varphi v^2 dxdt - C_1 \left( \|v(\cdot,T)\|_{H^\beta(\Omega)}^2 + \|v(\cdot,0)\|_{H^\beta(\Omega)}^2 \right) \\
&\quad - s\lambda  \left( \varphi(T)\|v(\cdot,T)\|_{L^2(\Omega)}^2 + \|v(\cdot,0)\|_{L^2(\Omega)}^2 \right).
\end{aligned}
\end{equation}
We now estimate the fractional semi-norm \( \|(-\Delta)^{\beta/2} v\|_{L^2(\Omega_T)}^2 \). For this, we estimate the following three terms separately.
\[
\int_{\Omega_T} (Pv) v dxdt = \int_{\Omega_T} v \partial_t v dxdt - s\lambda \int_{\Omega_T} \varphi v^2 dxdt + \int_{\Omega_T} v (-\Delta)^\beta v dxdt =: J_1 + J_2 + J_3.
\]
For \( J_1 \):
  \[
  |J_1| = \left| \frac{1}{2} \int_{\Omega_T} \partial_t (v^2) dxdt \right| \leq \frac{1}{2} \left( \|v(\cdot,T)\|_{L^2(\Omega)}^2 + \|v(\cdot,0)\|_{L^2(\Omega)}^2 \right).
  \]
 For \( J_2 \):
  \[
  |J_2| = s\lambda \int_{\Omega_T} \varphi v^2 dxdt.
  \]
 For \( J_3 \) :
  \[
  J_3 = \left\| (-\Delta)^{\beta/2} v \right\|_{L^2(\Omega_T)}^2.
  \]
Substituting these estimates and multiplying both sides by \( \lambda \):
\begin{equation}\label{eq:sem_norm_pre}
\begin{aligned}
\lambda \left\| (-\Delta)^{\beta/2} v \right\|_{L^2(\Omega_T)}^2 &\leq \lambda \int_{\Omega_T} |Pv| |v| dxdt + C_2 s\lambda^2 \int_{\Omega_T} \varphi v^2 dxdt \\
&\quad + C_2 \lambda \left( \|v(\cdot,T)\|_{L^2(\Omega)}^2 + \|v(\cdot,0)\|_{L^2(\Omega)}^2 \right).
\end{aligned}
\end{equation}
Apply Young's inequality \( ab \leq \frac{1}{2}a^2 + \frac{1}{2}b^2 \) to the first term on the right-hand side of \eqref{eq:sem_norm_pre}:
\begin{equation}\label{eq:young_ineq}
\lambda \int_{\Omega_T} |Pv| |v| dxdt \leq \frac{1}{2} \|Pv\|_{L^2(\Omega_T)}^2 + \frac{\lambda^2}{2} \|v\|_{L^2(\Omega_T)}^2 = \frac{1}{2} \left\| f e^{s\varphi} \right\|_{L^2(\Omega_T)}^2 + \frac{\lambda^2}{2} \|v\|_{L^2(\Omega_T)}^2.
\end{equation}
Substituting \eqref{eq:young_ineq} into \eqref{eq:sem_norm_pre} and employing \eqref{eq:prelim_est1} to estimate the \( s\lambda^2 \int \varphi v^2 dxdt \) term, we obtain the inequality

\begin{equation}\label{eq:sem_norm_est}
\begin{aligned}
\lambda \left\| (-\Delta)^{\beta/2} v \right\|_{L^2(\Omega_T)}^2 &\leq C_3 \left\| f e^{s\varphi} \right\|_{L^2(\Omega_T)}^2 + C_3 \lambda^2 \|v\|_{L^2(\Omega_T)}^2 \\
&\quad + C_3 s\lambda  \left( \varphi(T)\|v(\cdot,T)\|_{H^\beta(\Omega)}^2 + \|v(\cdot,0)\|_{H^\beta(\Omega)}^2 \right).
\end{aligned}
\end{equation}

Combine \eqref{eq:prelim_est1} and \eqref{eq:sem_norm_est}. Choose \( \lambda \geq \lambda_0 \) (sufficiently large) and then \( s > s_0(\lambda) \) (sufficiently large) such that the term \( C_3 \lambda^2 \|v\|_{L^2(\Omega_T)}^2 \) is absorbed by the left-hand side term \( s\lambda^2 \int_{\Omega_T} \varphi v^2 dxdt \). This yields the core estimate:

\begin{equation}\label{eq:core_est_v}
\begin{aligned}
& s\lambda^2 \int_{\Omega_T} \varphi v^2 dxdt + \lambda \left\| (-\Delta)^{\beta/2} v \right\|_{L^2(\Omega_T)}^2 \\
&\leq C_4 \left\| f e^{s\varphi} \right\|_{L^2(\Omega_T)}^2 + C_4 s\lambda  \left(\varphi(T)\|v(\cdot,T)\|_{H^\beta(\Omega)}^2 + \|v(\cdot,0)\|_{H^\beta(\Omega)}^2 \right).
\end{aligned}
\end{equation}
Since \( v = e^{s\varphi} u \), substitute into \eqref{eq:core_est_v} to transform back to the original variable \( u \):

\begin{equation}\label{eq:core_est_u1}
\begin{aligned}
& s\lambda^2 \int_{\Omega_T} \varphi |u|^2 e^{2s\varphi} dxdt + \lambda \int_0^T \left\| (-\Delta)^{\beta/2} u \right\|_{L^2(\Omega)}^2 e^{2s\varphi} dt \\
\leq& C_4 \int_{\Omega_T} |f|^2 e^{2s\varphi} dxdt + C_4s\lambda e^{2s\varphi(T)} \left( \|\varphi(T)u(\cdot,T)\|_{H^\beta(\Omega)}^2 + \|u(\cdot,0)\|_{H^\beta(\Omega)}^2 \right).
\end{aligned}
\end{equation}

Next, estimate the time derivative term. From \( \partial_t u = e^{-s\varphi} (\partial_t v - s\lambda \varphi v) \), we have:
\[
\frac{1}{s\varphi} |\partial_t u|^2 e^{2s\varphi} \leq 2s\lambda^2 \varphi v^2 + \frac{2}{s\varphi} |\partial_t v|^2.
\]
Combining with inequality \eqref{eq:key_ineq2} and \eqref{eq:core_est_u1}, and choosing \( s \) sufficiently large to absorb the \( \frac{2}{s\varphi} |\partial_t v|^2 \) term, we get:
\begin{equation}\label{eq:dt_est}
\begin{aligned}
\int_{\Omega_T} \frac{1}{s\varphi} |\partial_t u|^2 e^{2s\varphi} dxdt 
\leq& C_5 \int_{\Omega_T} |f|^2 e^{2s\varphi} dxdt \\
&+ C_5 s\lambda e^{2s\varphi(T)} \left( \varphi(T)\|u(\cdot,T)\|_{H^\beta(\Omega)}^2 + \|u(\cdot,0)\|_{H^\beta(\Omega)}^2 \right).
\end{aligned}
\end{equation}

Finally, by the operator definition \( (-\Delta)^\beta u = f - \partial_t u \), apply the triangle inequality:
\[
\frac{1}{s\varphi} |(-\Delta)^\beta u|^2 e^{2s\varphi} \leq \frac{2}{s\varphi} |f|^2 e^{2s\varphi} + \frac{2}{s\varphi} |\partial_t u|^2 e^{2s\varphi}.
\]
Since \( s > s_0 > 1 \), the first term \( \frac{2}{s\varphi} |f|^2 e^{2s\varphi} \leq 2 |f|^2 e^{2s\varphi} \) can be absorbed by the right-hand side integral of \( |f|^2 e^{2s\varphi} \). Combining with \eqref{eq:dt_est}, we see that
\begin{equation}\label{eq:flap_est}
\begin{aligned}
\int_{\Omega_T} \frac{1}{s\varphi} |(-\Delta)^\beta u|^2 e^{2s\varphi} dxdt 
\leq& C_6 \int_{\Omega_T} |f|^2 e^{2s\varphi} dxdt \\
&+ C_6 s\lambda e^{ 2s\varphi(T)} \left( \varphi(T)\|u(\cdot,T)\|_{H^\beta(\Omega)}^2 + \|u(\cdot,0)\|_{H^\beta(\Omega)}^2 \right).
\end{aligned}
\end{equation}

Merge \eqref{eq:core_est_u1}, \eqref{eq:dt_est} and \eqref{eq:flap_est}. Note that the coefficient \( \frac{\lambda}{2} \) in the lemma statement is absorbed into the generic constant \( C \). We finally obtain the desired inequality, which completes the proof of the lemma.
\end{proof}

\begin{theorem}\label{thm1}
Let $\delta \le \beta$ and let $z \in C([0,T]; H^{2\beta}(\Omega)) \cap H^2(0,T; L^2(\Omega)) \cap L^2(0,T; H^{2\beta}(\Omega))$ with $z(\cdot,0) = 0$ and $\partial_t z(\cdot,0)=0$ in $\Omega$.
Then there exists $\lambda_0 > 0$ such that for all $\lambda \ge \lambda_0$, there exist $s(\lambda) > s_0$ and $C = C(s_0, \lambda_0, T, \Omega, \beta) > 0$ such that
\begin{equation*}
    \begin{aligned}
& \int_{\Omega_T} \Bigl( \frac{1}{s\varphi}  |\partial_t^2 z|^2 + s\lambda|(-\Delta)^\beta z|^2 + \lambda |(-\Delta)^{\beta/2} \partial_t z|^2 + s^3\lambda^4|z|^2 + s\lambda^2 \varphi|\partial_t z|^2 \Bigr) e^{2s\varphi} \, dx dt \\
\leq& C\left( \int_{\Omega_T} |\mathscr{P}_\beta(z)|^2 e^{2s\varphi} \, dx dt + s^2\lambda^3 e^{2s\varphi(T)}\left( \varphi(T)\|z(\cdot, T)\|_{H^{2\beta}(\Omega)}^2 + \|\partial_t z(\cdot, T)\|_{H^\beta(\Omega)}^2\right)\right).
\end{aligned}   
\end{equation*}
\end{theorem}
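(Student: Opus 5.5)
The plan is to reduce the second-order operator $\mathscr{P}_\beta$ to the first-order parabolic-type operator of Lemma \ref{carlmeanfordiff}, applied to $w:=\partial_t z$, and then to recover the lower-order quantities in $z$ from Lemma \ref{lem:terminal} and Remark \ref{remark1}.

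\textbf{Step 1: apply Lemma \ref{carlmeanfordiff} to $w$.} We have
\[
\partial_t w+(-\Delta)^\beta w=\mathscr{P}_\beta(z)-(-\Delta)^\delta z .
\]
Since $w$ inherits the volume constraint and $w(\cdot,0)=0$, Lemma \ref{carlmeanfordiff} gives
\[
\int_{\Omega_T}\Bigl\{\tfrac{1}{s\varphi}\bigl(|\partial_t^2 z|^2+|(-\Delta)^\beta\partial_t z|^2\bigr)+s\lambda^2\varphi|\partial_t z|^2+\lambda|(-\Delta)^{\beta/2}\partial_t z|^2\Bigr\}e^{2s\varphi}
\le C\!\int_{\Omega_T}\!\bigl(|\mathscr{P}_\beta z|^2+|(-\Delta)^\delta z|^2\bigr)e^{2s\varphi}+Cs\lambda\varphi(T)e^{2s\varphi(T)}\|\partial_t z(\cdot,T)\|_{H^\beta}^2 .
\]
This already produces the terms in $\partial_t^2 z$, $(-\Delta)^{\beta/2}\partial_t z$ and $\partial_t z$ on the left-hand side. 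It remains to absorb the perturbation $(-\Delta)^\delta z$.

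\textbf{Step 2: absorb the perturbation.} Since $\delta\le\beta$, interpolation together with the equivalence of norms under the volume constraint gives
\[
\|(-\Delta)^\delta z\|_{L^2}\le C\bigl(\|(-\Delta)^\beta z\|_{L^2}+\|z\|_{L^2}\bigr).
\]
So we need weighted bounds for $(-\Delta)^\beta z$ and $z$ with large coefficients. Because $z(\cdot,0)=0$, we can write $(-\Delta)^\beta z(t)=\int_0^t(-\Delta)^\beta\partial_t z\,d\tau$. Remark \ref{remark1}, i.e. \eqref{remarkeq} applied to $(-\Delta)^\beta\partial_t z$, yields
\[
\int_{\Omega_T} e^{2s\varphi}|(-\Delta)^\beta z|^2\le \frac{e^{2s\varphi(T)}}{s\lambda}\|(-\Delta)^\beta z(\cdot,T)\|^2+\frac{1}{s^2\lambda^2}\int_{\Omega_T} e^{2s\varphi}|(-\Delta)^\beta\partial_t z|^2 .
\]
Multiplying by $s\lambda$ turns the interior term into $\frac{1}{s\lambda}\int e^{2s\varphi}|(-\Delta)^\beta\partial_t z|^2$. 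This is controlled by the Step 1 left-hand side, with the factor $1/(s\varphi)$, up to an extra factor $\varphi(T)/\lambda$. I would handle this mismatch with Lemma \ref{lem:terminal}, which carries the weights $\varphi$ and $1/\varphi$, rather than the unweighted remark. Alternatively, one can absorb it by taking $\lambda$ large relative to $\varphi$ and adjusting constants.

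In the same way, $z=\int_0^t\partial_t z$ together with Lemma \ref{lem:terminal} gives
\[
s^3\lambda^4\int\varphi e^{2s\varphi}|z|^2\le s^2\lambda^3 e^{2s\varphi(T)}\|z(\cdot,T)\|^2+s\lambda^2\int \tfrac{e^{2s\varphi}}{\varphi}|\partial_t z|^2 ,
\]
and the last term is bounded by the Step 1 term $s\lambda^2\varphi|\partial_t z|^2$. The resulting terminal terms, of type $s\lambda e^{2s\varphi(T)}\|z(\cdot,T)\|_{H^{2\beta}}^2$ and $s^2\lambda^3e^{2s\varphi(T)}\|z(\cdot,T)\|^2$, are all dominated by $s^2\lambda^3\varphi(T)e^{2s\varphi(T)}\|z(\cdot,T)\|_{H^{2\beta}}^2$.

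\textbf{Step 3: close the estimate.} Now substitute the bound $C\int|(-\Delta)^\delta z|^2e^{2s\varphi}\le C\int(|(-\Delta)^\beta z|^2+|z|^2)e^{2s\varphi}$ into Step 1. The left-hand side of the combined estimate contains $s\lambda|(-\Delta)^\beta z|^2$ and $s^3\lambda^4|z|^2$ with large coefficients, so for $\lambda\ge\lambda_0$ and $s$ large the perturbation is absorbed. What remains is the stated inequality.

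\textbf{Main obstacle.} I expect the hard part to be the bookkeeping of powers of $s,\lambda,\varphi$ in Step 2. Recovering $s\lambda|(-\Delta)^\beta z|^2$ from the Step 1 term $\frac{1}{s\varphi}|(-\Delta)^\beta\partial_t z|^2$ loses roughly a factor $s^2\varphi$, so the integration-in-time lemma must supply exactly the gain $1/(s^2\lambda^2)$ with compatible $\varphi$-weights. If this fails, my fallback is to apply Step 1 to $w$ with the term $(-\Delta)^\beta z$ moved differently. Concretely, I would estimate $(-\Delta)^\beta z$ via the equation, using $\partial_t[(-\Delta)^\beta z]=\mathscr{P}_\beta z-\partial_t^2 z-(-\Delta)^\delta z$, and integrate from $T$ backward so that a terminal term is produced.
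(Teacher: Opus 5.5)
Your proposal is correct and is essentially the paper's proof. You apply Lemma \ref{carlmeanfordiff} to $\partial_t z$ with $-(-\Delta)^\delta z$ as a source. You use the weighted Lemma \ref{lem:terminal} for $(-\Delta)^\beta z$, which leaves exactly a spare factor $1/\lambda$ against $\frac{1}{s\varphi}|(-\Delta)^\beta\partial_t z|^2$, so your fallback is not needed. You bound $z$ by integrating $\partial_t z$ in time; the paper uses the unweighted Remark \ref{remark1} there, and your weighted version works equally well. Finally, interpolation and large $s$ absorb $(-\Delta)^\delta z$.

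Two small remarks. First, your alternative of ``taking $\lambda$ large relative to $\varphi$'' cannot work, since $\varphi(T)/\lambda=e^{\lambda T}/\lambda\to\infty$. Second, turning your terminal term $s\lambda\varphi(T)e^{2s\varphi(T)}\|\partial_t z(\cdot,T)\|_{H^\beta(\Omega)}^2$ into the stated $s^2\lambda^3e^{2s\varphi(T)}\|\partial_t z(\cdot,T)\|_{H^\beta(\Omega)}^2$ requires $s\ge\varphi(T)/\lambda^2$, which the choice $s=s(\lambda)$ allows.
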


\begin{proof}
Firstly, we obtain the following estimate when $u(x,0)=0$ for the Carleman estimate of Lemma \ref{carlmeanfordiff}:
\begin{equation*}
    \begin{aligned}
& \int_{\Omega_T} \Bigl( \frac{1}{s\varphi} \left( |\partial_t z|^2 + |(-\Delta)^\beta z|^2 \right) + \frac{\lambda}{2} |(-\Delta)^{\beta/2} z|^2 + s\lambda^2 \varphi z^2 \Bigr) e^{2s\varphi} \, dx dt \\
\leq& C \int_{\Omega_T} |\partial_t z + (-\Delta)^\beta z|^2 e^{2s\varphi} \, dx dt + Cs\lambda e^{2s\varphi(T)}\varphi(T) \|z(\cdot, T)\|_{H^\beta(\Omega)}^2.
\end{aligned}   
\end{equation*}
Since $\partial_t^2z + (-\Delta)^\beta \partial_t z = \mathscr{P}_\beta(z) - (-\Delta)^{\delta} z$. Inspired by the skill in \cite{wubin2012}, we have the following Carleman estimate:
\begin{equation}\label{wholecarleman}
    \begin{aligned}
& \int_{\Omega_T} \Bigl( \frac{1}{s\varphi} \left( |\partial_t^2 z|^2 + |(-\Delta)^\beta \partial_t z|^2 \right) + \frac{\lambda}{2} |(-\Delta)^{\beta/2} \partial_t z|^2 + s\lambda^2 \varphi|\partial_t z|^2 \Bigr) e^{2s\varphi} \, dx dt \\
\leq& C \int_{\Omega_T} |\mathscr{P}_\beta(z)|^2 e^{2s\varphi} \, dx dt + \int_{\Omega_T}e^{2s\varphi}|(-\Delta)^{\delta} z|^2\,dxdt + s\lambda e^{2s\varphi(T)} \varphi(T)\|\partial_t z(\cdot, T)\|_{H^\beta(\Omega)}^2.
\end{aligned}   
\end{equation}
Using \eqref{remarkeq} in Remark \ref{remark1} with $w=\partial_t z$ , we have
\[
\begin{aligned}
\int_{\Omega_T} e^{2s \varphi}\Bigl|\int_0^t \partial_t z(\cdot,\tau)d\tau\Bigr|^{2}\,dxdt
\le& \frac{e^{2s \varphi(T)}}{s\lambda}\int_\Omega\Bigl|\int_0^T \partial_t z(\cdot,\tau)d\tau\Bigr|^{2}\,dx
\\
& + \frac{1}{s^{2}\lambda^{2}}\int_{\Omega_T} e^{2s\varphi}|\partial_t z(\cdot,t)|^{2}\,dxdt.
\end{aligned}
\]
In view of the fact that $z(x,0)=0$ for $x\in \Omega$, we obtain
\begin{equation}\label{aply_estimate}
\begin{aligned}
s^3\lambda^4 \int_{\Omega_T} e^{2s \varphi}| z(\cdot,t)|^{2}\,dxdt
\le s^2\lambda^3 e^{2s \varphi(T)}\|z(\cdot,T)\|_{L^2(\Omega)}^{2}   + s\lambda^2 \int_{\Omega_T} e^{2s\varphi}|\partial_t z(\cdot,t)|^{2}\,dxdt.
\end{aligned}
\end{equation}
Using Lemma \ref{lem:terminal}, we have
\begin{equation*}
    \begin{aligned}
 \int_{\Omega_T} \varphi(t) e^{2s \varphi}\Bigl|\int_0^t \partial_t (-\Delta)^\beta z(\cdot,\tau)d\tau\Bigr|^{2}\,dxdt &\le
 \frac{e^{2s \varphi(T)}}{s\lambda}\int_\Omega\Bigl|\int_0^T \partial_t (-\Delta)^\beta z(\cdot,\tau)d\tau\Bigr|^{2}\,dx
   \\
   &\quad+\frac{1}{s^{2}\lambda^{2}}\int_{\Omega_T} \frac{e^{2s\varphi}}{\varphi}|\partial_t (-\Delta)^\beta z(\cdot,t)|^{2}\,dxdt,
    \end{aligned}
\end{equation*}
which further yields
\begin{equation}\label{estimate_Deltau}
\begin{aligned}
     s\lambda\int_{\Omega_T} e^{2s \varphi}\bigl| (-\Delta)^\beta z(\cdot,t)\bigr|^{2}\,dxdt
\le& e^{2s\varphi(T)}\|z(\cdot,T)\|_{H^{2\beta}(\Omega)}^2
\\
   &+ \frac{1}{s\lambda}\int_{\Omega_T} \frac{e^{2s\varphi}}{\varphi}|\partial_t(-\Delta)^{\beta}z(\cdot,t)|^{2}\,dxdt.
\end{aligned}
\end{equation}

Combining \eqref{wholecarleman},\eqref{aply_estimate} and \eqref{estimate_Deltau} with $\lambda \geq \lambda_1$, we get
\begin{equation*}
    \begin{aligned}
& \int_{\Omega_T} \Bigl( \frac{1}{s\varphi}  |\partial_t^2 z|^2 + s\lambda|(-\Delta)^\beta z|^2 + \lambda |(-\Delta)^{\beta/2} \partial_t z|^2 + s^3\lambda^4|z|^2 + s\lambda^2 \varphi|\partial_t z|^2 \Bigr) e^{2s\varphi} \, dx dt \\
\le& C\left( \int_{\Omega_T} |\mathscr{P}_\beta(z)|^2 e^{2s\varphi} \, dx dt + \int_{\Omega_T}e^{2s\varphi}|(-\Delta)^{\delta} z|^2\,dxdt\right.\\
&\left.+ s^2\lambda^3 e^{2s\varphi(T)}\left( \|z(\cdot, T)\|_{H^{2\beta}(\Omega)}^2 +\varphi(T)\|\partial_t z(\cdot, T)\|_{H^\beta(\Omega)}^2\right)\right).
\end{aligned}   
\end{equation*}
Finally, by noting the interpolation estimate (see \cite{bergh1976interpolation})
\begin{equation*}
    \|(-\Delta)^{\delta} z\|_{L^2(\Omega_T)} \le C(\beta)(\|z\|_{L^2(\Omega_T)}+\|(-\Delta)^{\beta} z\|_{L^2(\Omega_T)}), \quad
     \delta \leq \beta
\end{equation*}
and choosing $s\geq s_1$, we obtain
\begin{equation*}
    \begin{aligned}
& \int_{\Omega_T} \Bigl( \frac{1}{s\varphi}  |\partial_t^2 z|^2 + s\lambda|(-\Delta)^\beta z|^2 + \lambda |(-\Delta)^{\beta/2} \partial_t z|^2 + s^3\lambda^4|z|^2 + s\lambda^2 \varphi|\partial_t z|^2 \Bigr) e^{2s\varphi} \, dx dt \\
\leq& C\left( \int_{\Omega_T} |\mathscr{P}_\beta(z)|^2 e^{2s\varphi} \, dx dt + s^2\lambda^3 e^{2s\varphi(T)}\left( \|z(\cdot, T)\|_{H^{2\beta}(\Omega)}^2 +\varphi(T)\|\partial_t z(\cdot, T)\|_{H^\beta(\Omega)}^2\right)\right).
\end{aligned}   
\end{equation*}
This completes the proof of the theorem.
\end{proof}

\section{Proof of the Conditional Stability}\label{sec4}
In this section, we prove the conditional stability estimates by virtue of the Carleman estimates established in Section \ref{sec3}.

\begin{proof}[\bf Proof of Theorem \ref{thm3}]
To make sure the initial displacement is equal to $0$, we introduce the following cut-off function which also important to the conditional stability. We fix $\lambda>0$ and choose $t_1,t_2$ such that $0<t_1<t_2<t_0$. Define $\varphi(t)=e^{\lambda t}$.  
Let $\chi\in C^\infty(\mathbb{R})$ be a cut‑off function with $0\le\chi\le1$ and  
\begin{equation}\label{cutoff}
    \chi(t)=\begin{cases}
1, & t>t_2,\\
0, & t<t_1.
\end{cases}
\end{equation}

Set $w=\chi u$. Then $\mathscr{P}_\beta(w)=\chi''(t)u+2\chi'(t)\partial_t u+\chi'(t)(-\Delta)^\beta u- a\partial_t(-\Delta)^{\frac{\gamma}{2}}w$ and set $\mathscr{P}(u)=\chi''(t)u+2\chi'(t)\partial_t u+\chi'(t)(-\Delta)^\beta u$.Then by the Cauchy inequality,
\[
|\mathscr{P}_\beta(w)|^2 \le  2|\mathscr{P}(u)|^2+ 2| a|^2 |\partial_t(-\Delta)^{\gamma/2}w|^2 .
\]
Since $\chi$ and $a$ are bounded, the constants can be absorbed into the generic constant $C$.

Applying the Carleman estimate in Theorem \ref{thm1} for $w$, we obtain
\begin{equation}
\begin{aligned}
    & \int_{\Omega_T} e^{2s \varphi}\left( \frac{1}{s\varphi}  |\partial_t^2 w|^2+s\lambda^2\varphi|\partial_t w|^2 \ + \frac{\lambda}{2}|(-\Delta)^{\beta/2} \partial_t w|^2+s^3\lambda^4|w|^2 \right) \, dxdt \\
    \leq& C \left(\int_{\Omega_T} e^{2s \varphi} | \mathscr{P}(u)|^2\, dxdt \right.+\int_{\Omega_T}e^{2s\varphi}|\partial_t(-\Delta)^{\frac{\gamma}{2}}w|^2 \,dxdt\\
    &\quad + s^2\lambda^3 e^{2s \varphi(T)}\left(\varphi(T)\|\partial_t w(\cdot,T)\|_{H^\beta(\Omega)}^2+\| w(\cdot,T)\|_{H^{2\beta}(\Omega)}^2\right)\Biggr).
\end{aligned}
\end{equation}

Now we need to estimate $\partial_t(-\Delta)^{\frac{\gamma}{2}} u$. It is not difficult to check that 
\begin{equation*}
    \|\partial_t(-\Delta)^{\frac{\gamma}{2}} w\|_{L^2(\Omega_T)} \leq C(\beta)(\|\partial_t w\|_{L^2(\Omega_T)}+\|\partial_t(-\Delta)^{\frac{\beta}{2}} w\|_{L^2(\Omega_T)}),\quad \gamma \leq \beta,
\end{equation*}
then, by choosing $\lambda \geq \lambda_1$, we can absorb all the space fractional damping terms into the left‑hand side and thus we obtain 
\begin{equation}\label{4.3}
    \begin{aligned}
& \int_{\Omega_T} e^{2s\varphi}\bigl(\frac{1}{s\varphi}  |\partial_t^2 w|^2+s\lambda^2\varphi|\partial_t w|^2+\frac{\lambda}{2}|(-\Delta)^{\beta/2}\partial_t w|^2+s^3\lambda^4|w|^2\bigr)\,dxdt\\
\le& C\Biggl(
\int_{t_1}^{t_2}\int_\Omega e^{2s\varphi}|\mathscr{P}(u)|^2\,dxdt+ s^2\lambda^3 e^{2s\varphi(T)}\bigl(\varphi(T)\|\partial_t w(\cdot,T)\|_{H^\beta(\Omega)}^2+\|w(\cdot,T)\|_{H^{2\beta}(\Omega)}^2\bigr)\Biggr).
\end{aligned}
\end{equation}

Since $\chi$ is supported in $[t_1,t_2]$, we have $|\mathscr{P}(u)|\le C_0(|u|+|\partial_t u|+|(-\Delta)^\beta u|)$ on $(t_1,t_2)$.  
Using the a priori bound $\|u\|_{H^1(0,T;H^{2\beta}(\Omega))}\le M$ and the fact that $e^{2s\varphi}\le e^{2s\varphi(t_2)}$ on $(t_1,t_2)$, we obtain  
\[
\int_{t_1}^{t_2}\int_\Omega e^{2s\varphi}|\mathscr{P}(u)|^2\,dxdt\le C_1 e^{2s\varphi(t_2)}M^2.
\]  
Moreover, because $w=u$ for $t>t_2$, we have 
\[
\|\partial_t w(\cdot,T)\|_{H^\beta(\Omega)}^2+\|w(\cdot,T)\|_{L^2(\Omega)}^2\le C_2\bigl(\|u(\cdot,T)\|_{L^2(\Omega)}^2+\|\partial_t u(\cdot,T)\|_{H^\beta(\Omega)}^2\bigr).
\]  
Thus \eqref{4.3} becomes
\begin{equation}\label{eq5.191}
\begin{aligned}
& \int_{\Omega_T} e^{2s\varphi}\bigl(\frac{1}{s\varphi}  |\partial_t^2 w|^2+s\lambda^2\varphi|\partial_t w|^2+\frac{\lambda}{2}|(-\Delta)^{\beta/2}\partial_t w|^2+s^3\lambda^4|w|^2\bigr)\,dxdt\\
&\le C_3\left(e^{2s\varphi(t_2)}M^2 + s^2\lambda^3 e^{2s\varphi(T)}\bigl(\|u(\cdot,T)\|_{H^{2\beta}(\Omega)}^2+\varphi(T)\|\partial_t u(\cdot,T)\|_{H^\beta(\Omega)}^2\bigr)\right).
\end{aligned}
\end{equation}

For $t\ge t_0$ we have $w=u$, and $e^{2s\varphi}\ge e^{2s\varphi(t_0)}$. Hence  
\[
e^{2s\varphi(t_0)}\int_{t_0}^T\int_\Omega\bigl(\frac{1}{s\varphi}  |\partial_t^2 u|^2+s\lambda\varphi|\partial_t u|^2+s^3\lambda^4|u|^2\bigr)dxdt
\le \int_{\Omega_T} e^{2s\varphi}\bigl(s|\partial_t w|^2+s^3|w|^2\bigr)dxdt.
\]  
Choosing a suitable $C_4(\lambda)$ depending on $\lambda$, and  combining the above estimates yields  
\[
\begin{aligned}
&\int_{t_0}^T\int_\Omega\bigl(\frac{1}{s\varphi}  |\partial_t^2 u|^2+s\lambda\varphi|\partial_t u|^2+s^3\lambda^4|u|^2\bigr)dxdt
\\
\le & C_4 e^{-2s(\varphi(t_0)-\varphi(t_2))}M^2 
+ C_4(\lambda) s^2\lambda^3 e^{2s(\varphi(T)-\varphi(t_0))}\bigl(\|u(\cdot,T)\|_{H^{2\beta}(\Omega)}^2+\|\partial_t u(\cdot,T)\|_{H^\beta(\Omega)}^2\bigr).
\end{aligned}
\]

Given the relation among $t_0,t_1,t_2$, we denote
\[
\alpha_1:=\varphi(t_0)-\varphi(t_2)>0,\qquad 
\rho:=\varphi(T)-\varphi(t_0)>0,
\]  
and set  
\[
\mathcal{D}^2:=\|u(\cdot,T)\|_{H^{2\beta}(\Omega)}^2+\|\partial_t u(\cdot,T)\|_{H^\beta(\Omega)}^2.
\]  
Then \eqref{eq5.191} becomes  
\[
\int_{t_0}^T\int_\Omega\bigl(\frac{1}{s\varphi}  |\partial_t^2 u|^2+s\lambda\varphi|\partial_t u|^2+s^3\lambda^4|u|^2\bigr)dxdt
\le C_4 e^{-2s\alpha_1}M^2 + C_4 s^2\lambda^3 e^{2s\rho} \mathcal{D}^2. 
\]

Now we estimate $\|u(\cdot,t_0)\|_{L^2(\Omega)}^2+\|\partial_t u(\cdot,t_0)\|_{L^2(\Omega)}^2$:

\[
\begin{aligned}
&\|u(\cdot,t_0)\|_{L^2(\Omega)}^2 + \|\partial_t u(\cdot,t_0)\|_{L^2(\Omega)}^2 \\
=& -\int_{t_0}^T \partial_t \left( \|u(t)\|_{L^2(\Omega)}^2 + \|\partial_t u(t)\|_{L^2(\Omega)}^2 \right) dt + \|u(\cdot,T)\|_{L^2(\Omega)}^2 + \|\partial_t u(\cdot,T)\|_{L^2(\Omega)}^2 \\
=& -2\int_{t_0}^T \int_\Omega \left( u\,\partial_t u + \partial_t^2 u\,\partial_t u \right) dxdt + \|u(\cdot,T)\|_{L^2(\Omega)}^2 + \|\partial_t u(\cdot,T)\|_{L^2(\Omega)}^2.
\end{aligned}
\]
Using the Cauchy–Schwarz inequality and the Young inequality, the cross terms can be controlled. For a parameter $s>0$, we have
\[
-2u\,\partial_t u \le s^3 |u|^2 + \frac{1}{s^3}|\partial_t u|^2,\qquad
-2\partial_t u\,\partial_t^2 u \le s\varphi|\partial_t u|^2 + \frac{1}{s\varphi}|\partial_t^2 u|^2.
\]
Consequently,
\[
\begin{aligned}
&\|u(\cdot,t_0)\|_{L^2(\Omega)}^2 + \|\partial_t u(\cdot,t_0)\|_{L^2(\Omega)}^2 \\
\le& \int_{t_0}^T \int_\Omega \left( s^3|u|^2 + \bigl(s\varphi + \tfrac{1}{s^3}\bigr)|\partial_t u|^2 + \tfrac{1}{s\varphi}|\partial_t^2 u|^2 \right) dxdt
+ \|u(\cdot,T)\|_{L^2(\Omega)}^2 + \|\partial_t u(\cdot,T)\|_{L^2(\Omega)}^2.
\end{aligned}
\]
Choosing suitable $s,\lambda>0$, we easily obtain
\[
\|u(\cdot,t_0)\|_{L^2(\Omega)}^2 +\|\partial_t u(\cdot,t_0)\|_{L^2(\Omega)}^2\le C_4 e^{-2s\alpha_1}M^2 + C_4(\lambda) s^2\lambda^3 e^{2s\rho}\mathcal{D}^2.
\]  
We choose $s\geq s_1$ such that $s^2\lambda^3 \leq e^{2s\rho}$. We choose a large $C_5$, and then we obtain
\begin{equation}\label{estimate_1}
    \|u(\cdot,t_0)\|_{L^2(\Omega)}^2 +\|\partial_t u(\cdot,t_0)\|_{L^2(\Omega)}^2 \le C_4 e^{-2s\alpha_1}M^2 + C_5 e^{2s\rho}\mathcal{D}^2.
\end{equation}

First, let $\mathcal{D} = 0$. Then letting $s \to \infty$, we see that $u(x,t_0) = 0$. Thus the conclusion holds.
Second, assume $\mathcal{D} \neq 0$. If $\mathcal{D} \geq M$, then the inequality $\|u(\cdot,t_0)\|_{L^2(\Omega)} \leq C e^{Cs} \mathcal{D}$ holds for all $s \geq 0$, which already proves the theorem. Now let $\mathcal{D} \leq M$. We choose $s>0$ to minimize the right‑hand side of \eqref{estimate_1} and obtain an estimate, namely,
\[
s = \frac{1}{2(\alpha_1+\rho)}\ln\left(\frac{M^2}{\mathcal{D}^2}\right) + s_0.
\]  
Then  
\[
e^{-2s\alpha_1} =  e^{-\frac{\alpha_1}{\alpha_1+\rho}\ln\frac{M^2}{\mathcal{D}^2}} e^{-2s_0\alpha_1} = e^{-2s_0\alpha_1} \left(\frac{\mathcal{D}^2}{M^2}\right)^{\frac{\alpha_1}{\alpha_1+\rho}},
\]  
\[
e^{2s\rho} =  e^{\frac{\rho}{\alpha_1+\rho}\ln\frac{M^2}{\mathcal{D}^2}} e^{2s_0\rho} = e^{2s_0\rho} \left(\frac{M^2}{\mathcal{D}^2}\right)^{\frac{\rho}{\alpha_1+\rho}}.
\]  
Substituting into \eqref{estimate_1} gives  
\[
\|u(\cdot,t_0)\|_{L^2(\Omega)}^2+\|\partial_t u(\cdot,t_0)\|_{L^2(\Omega)}^2 \le C\Bigl(  M^2 \bigl(\tfrac{\mathcal{D}^2}{M^2}\bigr)^{\frac{\alpha_1}{\alpha_1+\rho}} +  \mathcal{D}^2 \bigl(\tfrac{M^2}{\mathcal{D}^2}\bigr)^{\frac{\rho}{\alpha_1+\rho}} \Bigr).
\]  
Thus there exists a constant $C_6$ such that  
\[
\|u(\cdot,t_0)\|_{L^2(\Omega)}^2+\|\partial_t u(\cdot,t_0)\|_{L^2(\Omega)}^2 \le C_6 M^{\frac{2\rho}{\alpha_1+\rho}} \mathcal{D}^{\frac{2\alpha_1}{\alpha_1+\rho}}.
\]  
Taking square roots and setting $\theta_1 = \frac{\alpha_1}{\alpha_1+\rho}\in(0,1)$ yields  
\[
\|u(\cdot,t_0)\|_{L^2(\Omega)}+\|\partial_t u(\cdot,t_0)\|_{L^2(\Omega)} \le C M^{1-\theta_1} \mathcal{D}^{\theta_1}.
\]  
Finally, $\mathcal{D} = \bigl(\|u(\cdot,T)\|_{H^{2\beta}(\Omega)}^2+\|\partial_t u(\cdot,T)\|_{H^\beta(\Omega)}^2\bigr)^{1/2} \le \|u(\cdot,T)\|_{H^{2\beta}(\Omega)}+\|\partial_t u(\cdot,T)\|_{H^\beta(\Omega)}$, so  
\begin{equation}
    \|u(\cdot,t_0)\|_{L^2(\Omega)}+\|\partial_t u(\cdot,t_0)\|_{L^2(\Omega)} \le C M^{1-\theta_1}\bigl(\|u(\cdot,T)\|_{H^{2\beta}(\Omega)}+\|\partial_t u(\cdot,T)\|_{H^\beta(\Omega)}\bigr)^{\theta_1}.
\end{equation}
This completes the proof of the theorem.
\end{proof}
\begin{remark}
Since our weight function depends solely on time, all the foregoing discussions on the spatial fractional order remain valid for the integer-order case. 
\end{remark}
\begin{remark}\label{remark4.1}
The exponent $\theta_1$ in the conditional stability estimate has the explicit form $\theta_1 = \frac{\varphi(t_0)-\varphi(t_2)}{\varphi(T)-\varphi(t_2)}$. This indicates that the ill-posedness of the problem becomes stronger as $t_0$ decreases.
\end{remark}

\section{Tikhonov Regularization}\label{sec5}
We consider the backward problem for the initial-boundary value problem \eqref{eq-main}: given the final-time measurements \(d_u(x) = u(x,T)\) and \(d_v(x) = \partial_t u(x,T)\) over the domain \(\Omega\), reconstruct the state \(\phi(x) = (u(x,t_0), \partial_t u(x,t_0))^T\) at any past time \(0 \leq t_0 < T\). This problem is severely ill-posed, as small perturbations in the final data can lead to large errors in the reconstructed past state. Based on the established conditional stability estimate, we propose a Tikhonov regularization-based minimization approach and derive the gradient via the variational adjoint method for efficient numerical solution. To simplify the derivation of the adjoint system, we henceforth assume that the coefficient $a$ is a constant in this section.

\subsection{Objective functional}
For the backward problem, we define the Tikhonov cost functional \(\mathcal{J}: (L^2(\Omega))^2 \to \mathbb{R}\) with an \(L^2\)-penalty term:
\begin{equation}
\mathcal{J}(\phi) = \frac{1}{2}\left\| u[\phi](\cdot,T) - d_u \right\|_{L^2(\Omega)}^2 + \frac{1}{2}\left\| \partial_t u[\phi](\cdot,T) - d_v \right\|_{L^2(\Omega)}^2 + \frac{\alpha}{2}\left\| \phi - \phi^b \right\|_{(L^2(\Omega))^2}^2,
\label{eq:cost_functional}
\end{equation}
where \(u[\phi]\) denotes the solution to the forward problem \eqref{eq-main} with initial conditions \(u(x,t_0) = b_0(x)\) and \(\partial_t u(x,t_0) = b_1(x)\) (i.e., \(\phi = (b_0, b_1)^T\)), \(\phi^b = (b_0^b, b_1^b)^T\) is the background prior state (set to zero in our numerical implementations if no prior information is available), and \(\alpha > 0\) is the regularization parameter that balances the data fidelity term and the regularization term. The minimizer \(\phi^* = \arg\min_{\phi} \mathcal{J}(\phi)\) is taken as the regularized solution to the backward problem. To compute the minimizer efficiently, we use the variational adjoint method to derive the gradient \(\nabla \mathcal{J}(\phi)\).

\subsubsection{Gateaux derivative and sensitivity system}
For arbitrary perturbation \(\hat{\phi} = (\hat{u}_0, \hat{v}_0)^T \in (L^2(\Omega))^2\) and small parameter \(\varepsilon > 0\), define the perturbed solution \(u^\varepsilon = u[\phi + \varepsilon \hat{\phi}]\) to the forward problem \eqref{eq-main} with initial conditions \(u(x,t_0) = b_0 + \varepsilon \hat{u}_0\) and \(\partial_t u(x,t_0) = b_1 + \varepsilon \hat{v}_0\). The first-order Gateaux derivative of the cost functional \(\mathcal{J}(\phi)\) is given by:
\begin{equation}
\begin{aligned}
\mathcal{J}'(\phi; \hat{\phi}) &= \lim_{\varepsilon \to 0} \frac{\mathcal{J}(\phi + \varepsilon \hat{\phi}) - \mathcal{J}(\phi)}{\varepsilon} \\
&= \int_\Omega \left( u[\phi](x,T) - d_u(x) \right) \lim_{\varepsilon \to 0} \frac{u^\varepsilon(x,T) - u[\phi](x,T)}{\varepsilon} dx \\
&\quad + \int_\Omega \left( \partial_t u[\phi](x,T) - d_v(x) \right) \lim_{\varepsilon \to 0} \frac{\partial_t u^\varepsilon(x,T) - \partial_t u[\phi](x,T)}{\varepsilon} dx \\
&\quad + \alpha \int_\Omega \left( (b_0(x) - b_0^b(x)) \hat{u}_0(x) + (b_1(x) - b_1^b(x)) \hat{v}_0(x) \right) dx.
\end{aligned}
\label{eq:gateaux_def}
\end{equation}
Define the sensitivity variables:
\[
\hat{u}(x,t) = \lim_{\varepsilon \to 0} \frac{u^\varepsilon(x,t) - u[\phi](x,t)}{\varepsilon}, \quad \hat{v}(x,t) = \partial_t \hat{u}(x,t).
\]
Substituting \(u^\varepsilon = u + \varepsilon \hat{u} + o(\varepsilon)\) into the forward problem \eqref{eq-main} and collecting terms of order \(\varepsilon\), we obtain the linearized sensitivity system:
\begin{equation}
\begin{cases}
\partial_t^2 \hat{u} + (-\Delta)^\delta \hat{u} + \partial_t (-\Delta)^\beta \hat{u} + a \partial_t (-\Delta)^{\gamma/2} \hat{u} = 0, & (x,t) \in \Omega \times (t_0, T), \\
\hat{u}(x, t_0) = \hat{u}_0(x), & x \in \Omega, \\
\partial_t \hat{u}(x, t_0) = \hat{v}_0(x), & x \in \Omega, \\
\hat{u}(x,t) = 0, & (x,t) \in \partial\Omega \times (t_0, T).
\end{cases}
\label{eq:sensitivity_system}
\end{equation}
Using the sensitivity variables, the Gateaux derivative can be rewritten as:
\begin{equation}
\mathcal{J}'(\phi; \hat{\phi}) = \int_\Omega \left( (u(T)-d_u) \hat{u}(T) + (\partial_t u(T)-d_v) \hat{v}(T) \right) dx + \alpha \langle \phi - \phi^b, \hat{\phi} \rangle_{(L^2(\Omega))^2}.
\label{eq:gateaux_final}
\end{equation}

\subsubsection{Adjoint problem and gradient derivation}
We construct the adjoint variable \(u^*(x,t)\) and multiply both sides of the sensitivity equation \eqref{eq:sensitivity_system} by \(u^*\), then integrate over the entire space-time domain \(\Omega \times (t_0, T)\):
\begin{equation}
\int_{t_0}^T \int_\Omega u^* \left( \partial_t^2 \hat{u} + (-\Delta)^\delta \hat{u} + \partial_t (-\Delta)^\beta \hat{u} + a \partial_t (-\Delta)^{\gamma/2} \hat{u} \right) dxdt = 0.
\label{eq:adjoint_integral}
\end{equation}
We now integrate each term by parts separately, all spatial boundary terms vanish after integration by parts.

For the second-order time derivative term, we have:
\[
\int_{t_0}^T \int_\Omega u^* \partial_t^2 \hat{u} dxdt = \int_\Omega \left[ u^* \partial_t \hat{u} - \hat{u} \partial_t u^* \right]_{t_0}^T dx + \int_{t_0}^T \int_\Omega \hat{u} \partial_t^2 u^* dxdt.
\]
For the fractional diffusion term, the self-adjointness directly gives:
\[
\int_{t_0}^T \int_\Omega u^* (-\Delta)^\delta \hat{u} dxdt = \int_{t_0}^T \int_\Omega \hat{u} (-\Delta)^\delta u^* dxdt.
\]
For the fractional damping term, we first integrate by parts in time and then use the self-adjointness of the fractional Laplacian:
\[
\begin{aligned}
\int_{t_0}^T \int_\Omega u^* \partial_t (-\Delta)^\beta \hat{u} dxdt &= \int_\Omega \left[ u^* (-\Delta)^\beta \hat{u} \right]_{t_0}^T dx - \int_{t_0}^T \int_\Omega (-\Delta)^\beta \hat{u} \partial_t u^* dxdt \\
&= \int_\Omega \left[ u^* (-\Delta)^\beta \hat{u} \right]_{t_0}^T dx - \int_{t_0}^T \int_\Omega \hat{u} \partial_t (-\Delta)^\beta u^* dxdt.
\end{aligned}
\]
For the constant coefficient fractional damping term, since \(a\) is constant and commutes with spatial operators, we follow a similar procedure:
\[
\begin{aligned}
\int_{t_0}^T \int_\Omega a u^* \partial_t (-\Delta)^{\gamma/2} \hat{u} dxdt &= a \int_\Omega \left[ u^* (-\Delta)^{\gamma/2} \hat{u} \right]_{t_0}^T dx - a \int_{t_0}^T \int_\Omega (-\Delta)^{\gamma/2} \hat{u} \partial_t u^* dxdt \\
&= a \int_\Omega \left[ u^* (-\Delta)^{\gamma/2} \hat{u} \right]_{t_0}^T dx - a \int_{t_0}^T \int_\Omega \hat{u} \partial_t (-\Delta)^{\gamma/2} u^* dxdt.
\end{aligned}
\]

Substituting all integrated terms back into \eqref{eq:adjoint_integral} and rearranging, we get:
\begin{equation}
\begin{aligned}
0 &= \int_{t_0}^T \int_\Omega \hat{u} \left( \partial_t^2 u^* + (-\Delta)^\delta u^* - \partial_t (-\Delta)^\beta u^* - a \partial_t (-\Delta)^{\gamma/2} u^* \right) dxdt \\
&\quad + \int_\Omega \left[ u^* \partial_t \hat{u} - \hat{u} \partial_t u^* + u^* (-\Delta)^\beta \hat{u} + a u^* (-\Delta)^{\gamma/2} \hat{u} \right]_{t_0}^T dx.
\end{aligned}
\label{eq:integrated_terms}
\end{equation}
To eliminate the volume integral term involving \(\hat{u}\), we require the adjoint variable \(u^*\) to satisfy the following backward-in-time system:
\begin{equation}
\partial_t^2 u^* + (-\Delta)^\delta u^* - \partial_t (-\Delta)^\beta u^* - a \partial_t (-\Delta)^{\gamma/2} u^* = 0, \quad (x,t) \in \Omega \times (t_0, T),
\label{eq:adjoint_equation}
\end{equation}
with homogeneous Dirichlet boundary condition \(u^*(x,t) = 0\) on \(\partial\Omega \times (t_0, T)\).

With this, equation \eqref{eq:integrated_terms} reduces to the boundary term identity:
\begin{equation}
\begin{aligned}
    &\left. \int_\Omega \left( u^* \partial_t \hat{u} - \hat{u} \partial_t u^* + u^* (-\Delta)^\beta \hat{u} + a u^* (-\Delta)^{\gamma/2} \hat{u} \right) dx \right|_{t=T} \\
    =& \left. \int_\Omega \left( u^* \partial_t \hat{u} - \hat{u} \partial_t u^* + u^* (-\Delta)^\beta \hat{u} + a u^* (-\Delta)^{\gamma/2} \hat{u} \right) dx \right|_{t=t_0}.
\end{aligned}
\end{equation}

Rearranging the terminal terms at \(t=T\) and using the initial conditions of the sensitivity system \(\hat{u}(t_0) = \hat{u}_0\) and \(\partial_t \hat{u}(t_0) = \hat{v}_0\), we obtain:
\begin{equation}
\begin{aligned}
&\int_\Omega \left( u^*(T) \partial_t \hat{u}(T) + \hat{u}(T) \left( -\partial_t u^*(T) + (-\Delta)^\beta u^*(T) + a (-\Delta)^{\gamma/2} u^*(T) \right) \right) dx \\
=& \int_\Omega \left( u^*(t_0) \hat{v}_0 + \hat{u}_0 \left( -\partial_t u^*(t_0) + (-\Delta)^\beta u^*(t_0) + a (-\Delta)^{\gamma/2} u^*(t_0) \right) \right) dx.
\end{aligned}
\label{eq:boundary_terms}
\end{equation}

We now choose the terminal conditions for the adjoint system to exactly match the data fidelity terms in the Gateaux derivative \eqref{eq:gateaux_final}. Comparing the left-hand side of \eqref{eq:boundary_terms} with the data terms in \eqref{eq:gateaux_final}, we set:
\[
\begin{cases}
u^*(x, T) = \partial_t u(x,T) - d_v(x), \\
-\partial_t u^*(x, T) + (-\Delta)^\beta u^*(x,T) + a (-\Delta)^{\gamma/2} u^*(x,T) = u(x,T) - d_u(x).
\end{cases}
\]
Solving for \(\partial_t u^*(x,T)\), we obtain the complete adjoint system:
\begin{equation}
\begin{cases}
\partial_t^2 u^* + (-\Delta)^\delta u^* - \partial_t (-\Delta)^\beta u^* - a \partial_t (-\Delta)^{\gamma/2} u^* = 0 & \text{in } \Omega \times (t_0, T), \\
u^*(\cdot, T) = \partial_t u(\cdot,T) - d_v & \text{in }\Omega, \\
\partial_t u^*(\cdot, T) = (-\Delta)^\beta u^*(\cdot,T) + a (-\Delta)^{\gamma/2} u^*(\cdot,T) - \left( u(\cdot,T) - d_u \right) & \text{in }\Omega, \\
u^* = 0 &  \text{on }\partial\Omega \times (t_0, T).
\end{cases}
\label{eq:adjoint_system_full}
\end{equation}

Substituting \eqref{eq:boundary_terms} into the Gateaux derivative \eqref{eq:gateaux_final}, we eliminate the unknown terminal terms and obtain:
\[
\begin{aligned}
\mathcal{J}'(\phi; \hat{\phi}) &= \int_\Omega \left( -\partial_t u^*(t_0) + (-\Delta)^\beta u^*(t_0) + a (-\Delta)^{\gamma/2} u^*(t_0) \right) \hat{u}_0 dx \\
&\quad + \int_\Omega u^*(t_0) \hat{v}_0 dx + \alpha \int_\Omega \left( (b_0 - b_0^b) \hat{u}_0 + (b_1 - b_1^b) \hat{v}_0 \right) dx.
\end{aligned}
\]
By the definition of the \(L^2\) gradient, this implies:
\begin{equation}
\nabla \mathcal{J}(\phi) = \begin{pmatrix}
-\partial_t u^*(\cdot,t_0) + (-\Delta)^\beta u^*(\cdot,t_0) + a (-\Delta)^{\gamma/2} u^*(\cdot,t_0) + \alpha (b_0 - b_0^b) \\
u^*(\cdot,t_0) + \alpha (b_1 - b_1^b)
\end{pmatrix}.
\label{eq:gradient}
\end{equation}
\subsection{Convergence rate analysis of Tikhonov regularization}
In this section, we establish the convergence rate of Tikhonov regularization based on the conditional stability result introduced in Theorem \ref{thm3}. To analyze the convergence rate, we need to interpolate the observations in order to obtain higher regularity estimates. This requires investigating the regularity of the solution of the equation. Thanks to the pseudo-parabolic nature of the strongly damped wave equation, it possesses an instantaneous smoothing effect. We refer to the results in \cite{fatori2012differentiability}.
\begin{lemma}\label{semigroup}
Consider the damped wave equation
\[
\partial_t^2 u + A u + B \partial_t u = 0,
\]
where $A$ and $B$ are self-adjoint positive definite operators on a Hilbert space $H$ satisfying
\[
C_1 A^\alpha \leq B \leq C_2 A^\alpha
\]
for some constants $C_1, C_2 > 0$ and $\alpha \in [1/2, 1]$. Then the $C_0$-contraction semigroup $S_B(t)$ generated by the operator associated with the damped wave equation is analytic. This implies that the semigroup possesses the smoothing property.
\end{lemma}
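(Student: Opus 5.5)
The plan is to recast the equation as a first-order system and prove analyticity through a resolvent estimate on the imaginary axis. Set $X:=D(A^{1/2})\times H$, with the energy inner product $\langle (u,v),(\tilde u,\tilde v)\rangle_X=\langle A^{1/2}u,A^{1/2}\tilde u\rangle+\langle v,\tilde v\rangle$. Define
\[
\mathcal{A}\begin{pmatrix}u\\ v\end{pmatrix}=\begin{pmatrix}v\\ -Au-Bv\end{pmatrix},\qquad D(\mathcal{A})=\{(u,v)\in X:\ v\in D(A^{1/2}),\ Au+Bv\in H\}.
\]

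\emph{Step 1: contraction semigroup.} A direct computation gives $\mathrm{Re}\langle \mathcal{A}U,U\rangle_X=-\langle Bv,v\rangle\le 0$, so $\mathcal{A}$ is dissipative. For maximality, I will solve $\mathcal{A}U=F$ explicitly: set $v=f_1$ and $u=-A^{-1}(f_2+Bf_1)$. This is well defined because $A$ is positive definite, hence boundedly invertible, and $B\le C_2A^\alpha$ with $\alpha\le 1$ makes $A^{-1/2}BA^{-1/2}$ bounded. So $0\in\rho(\mathcal{A})$, and the Lumer--Phillips theorem yields a $C_0$ contraction semigroup $S_B(t)$.

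\emph{Step 2: analyticity via the resolvent.} I will use the standard characterization (Pazy; Liu--Zheng): a bounded $C_0$ semigroup is analytic if and only if $i\mathbb{R}\subset\rho(\mathcal{A})$ and $\limsup_{|\mu|\to\infty}\|\mu(i\mu-\mathcal{A})^{-1}\|<\infty$.

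For $i\mathbb{R}\subset\rho(\mathcal{A})$, take a solution of $(i\mu-\mathcal{A})U=0$. Dissipativity forces $\langle Bv,v\rangle=0$, and positive definiteness of $B$ gives $v=0$. Then $u=0$, and compactness is not needed because the Fredholm alternative is replaced by the invertibility argument of Step 1 perturbed by $i\mu$.

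The resolvent bound I argue by contradiction. Suppose there are $\mu_n\to\infty$ and $\|U_n\|_X=1$ with $\mu_n\|(i\mu_n-\mathcal{A})U_n\|_X\to 0$. Write
\[
i\mu_n u_n-v_n=f_n,\qquad i\mu_n v_n+Au_n+Bv_n=g_n,\qquad \mu_n\|(f_n,g_n)\|_X\to0 .
\]
Taking the real part of the inner product with $U_n$ gives $\langle Bv_n,v_n\rangle=o(\mu_n^{-1})$. The lower bound $B\ge C_1A^\alpha$ then yields $\|A^{\alpha/2}v_n\|^2=o(\mu_n^{-1})$.

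Next I show $\|v_n\|\to0$. Since $A^\alpha\ge c>0$ the bound above already gives $\|v_n\|\to0$, and in fact $\|v_n\|^2=o(\mu_n^{-1})$.

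For $\|A^{1/2}u_n\|\to0$, I test the second equation against $u_n$ and use $i\mu_n u_n=v_n+f_n$. This gives
\[
\|A^{1/2}u_n\|^2=\|v_n\|^2-\langle Bv_n,u_n\rangle+o(1).
\]
The key term is $\langle Bv_n,u_n\rangle$. I estimate it by Cauchy--Schwarz in the $B$-form:
\[
|\langle Bv_n,u_n\rangle|\le \langle Bv_n,v_n\rangle^{1/2}\langle Bu_n,u_n\rangle^{1/2}\le o(\mu_n^{-1/2})\,C_2^{1/2}\|A^{\alpha/2}u_n\|.
\]
Now $u_n=(v_n+f_n)/(i\mu_n)$, so $\|A^{\alpha/2}u_n\|\le \mu_n^{-1}\big(\|A^{\alpha/2}v_n\|+\|A^{\alpha/2}f_n\|\big)$. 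This is $o(\mu_n^{-1})$, using $\alpha\le1$ so that $\|A^{\alpha/2}f_n\|\lesssim\|A^{1/2}f_n\|$.

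Hence $\|U_n\|_X\to0$, contradicting $\|U_n\|_X=1$.

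\emph{Main obstacle.} The delicate point is the operator comparison. The hypotheses $C_1A^\alpha\le B\le C_2A^\alpha$ hold only in the sense of quadratic forms, and $A$ and $B$ need not commute. So every step must be phrased through form bounds, namely $\langle Bx,x\rangle\simeq\|A^{\alpha/2}x\|^2$, rather than through operator-norm bounds such as $\|Bx\|\lesssim\|A^\alpha x\|$. In particular the domain $D(\mathcal{A})$ and the maximality in Step 1 must be handled by a Lax--Milgram argument on the form $\mu^2\langle u,\cdot\rangle+a(u,\cdot)+i\mu\, b(u,\cdot)$, where $a$ and $b$ are the closed forms of $A$ and $B$.

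The restriction $\alpha\ge 1/2$ is what makes the argument close: it is needed so that the damping dominates uniformly at high frequencies and the $o(\mu_n^{-1})$ gain suffices. If the above estimate of $\langle Bv_n,u_n\rangle$ proves insufficient near $\alpha=1/2$, I would fall back on the spectral (commuting) case. In that case the eigenvalues of $\mathcal{A}$, namely $\frac{-b_k\pm\sqrt{b_k^2-4a_k}}{2}$ with $b_k\simeq a_k^\alpha$, lie in a sector exactly when $\alpha\ge1/2$. I would then cite \cite{fatori2012differentiability} for the general form-comparable case.
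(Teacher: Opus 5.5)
Your Step 2 has a genuine gap: the contradiction hypothesis is scaled the wrong way. The negation of $\limsup_{|\mu|\to\infty}\|\mu(i\mu-\mathcal{A})^{-1}\|<\infty$ is the existence of $\mu_n\to\infty$ and $\|U_n\|_X=1$ with $\mu_n^{-1}\|(i\mu_n-\mathcal{A})U_n\|_X\to0$. That is, $\|(f_n,g_n)\|_X=o(\mu_n)$, not $o(\mu_n^{-1})$. What you actually refute is the existence of approximate eigenvectors with residual $o(\mu_n^{-1})$. This only proves $\|(i\mu-\mathcal{A})^{-1}\|\le C\mu$, which is far weaker than analyticity; it does not even give a bounded resolvent on $i\mathbb{R}$.

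A sanity check exposes the problem. Your argument never uses $\alpha\ge 1/2$, and it runs verbatim for $B=I$ (i.e.\ $\alpha=0$), where $\langle v_n,u_n\rangle\to0$ trivially. Yet for $B=I$ and $A$ with eigenvalues $a_k\to\infty$, the semigroup is not analytic. The spectrum $\{-\tfrac12\pm i\sqrt{a_k-\tfrac14}\}$ lies on a vertical line, so along $\mu=\sqrt{a_k-1/4}$ one has $\|\mu(i\mu-\mathcal{A})^{-1}\|\ge \mu/\mathrm{dist}(i\mu,\sigma(\mathcal{A}))=2\mu\to\infty$.

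With the correct hypothesis, the dissipation identity only gives $\langle Bv_n,v_n\rangle=o(\mu_n)$, hence $\|A^{\alpha/2}v_n\|^2=o(\mu_n)$. This does not force $\|v_n\|\to0$. Testing the second equation against $v_n$ and using the first yields only equipartition, $\|v_n\|^2-\|A^{1/2}u_n\|^2\to0$, and none of your later smallness claims survive.

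The missing idea is exactly where $\alpha\ge 1/2$ must enter. The part of $v_n$ resonant with $i\mu_n$ (spectral level $A\gtrsim\mu_n^2$) satisfies $\|v\|^2\lesssim\mu_n^{-2\alpha}\|A^{\alpha/2}v\|^2=o(\mu_n^{1-2\alpha})$, which is $o(1)$ precisely when $\alpha\ge\tfrac12$. The non-resonant part must then be controlled through the equations. In the commuting case this is the symbol bound $\bigl|i(1-a/\mu^2)+b/\mu\bigr|\ge c$ for $b\simeq a^\alpha$, $\alpha\ge\tfrac12$, $\mu\ge1$. When $A$ and $B$ are only form-comparable, this frequency splitting does not commute with $B$. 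Handling that is the real content of the Chen--Triggiani/Huang-type results and of \cite{fatori2012differentiability}; the paper offers no proof of its own and simply cites that reference.

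Two smaller points. First, injectivity of $i\mu-\mathcal{A}$ does not give surjectivity, so you do need a Lax--Milgram argument. Note the form is $a(u,\cdot)-\mu^2\langle u,\cdot\rangle+i\mu\,b(u,\cdot)$ with a minus sign, not the $+\mu^2$ you wrote. You use coercivity of $\bigl|a(u,u)-\mu^2\|u\|^2+i\mu\,b(u,u)\bigr|$ on $D(A^{1/2})$, relying on the imaginary part at low frequencies. Second, in your fallback, eigenvalues lying in a sector do not suffice for a non-normal generator. You need uniform resolvent bounds for the $2\times2$ blocks, which in the commuting case follow by a scaling argument.
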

According to Lemma \ref{semigroup} and Remark \ref{remark4.1}, we can only discuss the convergence rate under the special case that $\beta=\delta$ and $a=0$ when the observed data is defined in $L^2(\Omega) \times L^2(\Omega)$. 

Now we define the spaces
\[
X = H^{2\beta}_0(\Omega)\times H^\beta_0(\Omega),\qquad
X_0 = L^2(\Omega)\times L^2(\Omega),
\]
endowed with the norms
\[
\|v_0\|_X^2 = \|v_1\|_{H^{2\beta}(\Omega)}^2 + \|v_2\|_{H^\beta(\Omega)}^2,\qquad
\|(w_1,w_2)\|_{X_0}^2 = \|w_1\|_{L^2(\Omega)}^2 + \|w_2\|_{L^2(\Omega)}^2.
\]
In this section we assume that the observations are given in the weaker space $X_0$, equipped with its natural norm. The forward operator $\mathcal{F}_{t_0}:X\to Z$ maps the intermediate state at time $t_0$ to the final observations at time $T$, where $Z = H^{2\beta}(\Omega)\times H^\beta(\Omega)$ is the natural space for the data if the initial state is in $X$. However, the noise is measured in the $X_0$ norm. The forward operator is linear, bounded, and compact; in particular there exists $C_{\mathrm{reg}}>0$ such that
\[
\|\mathcal{F}_{t_0}v_0\|_Z \le C_{\mathrm{reg}}\|v_0\|_X\qquad\forall v\in X.
\]

\begin{lemma}
\label{lem:stabilityX0}
There exist constants $C_0>0$ and $\theta\in(0,1)$ such that for any solution $u$ of \eqref{eq-main} satisfying
\[
\|u\|_{H^1(0,T;H^{2\beta}(\Omega))}\le M,
\]
and for any $t_0\in(0,T)$, the intermediate state $v_0 = (u(\cdot,t_0),\partial_t u(\cdot,t_0))$ fulfills
\[
\|v_0\|_{X_0} \le C M^{1-\theta}\,\|\mathcal{F}_{t_0}v_0\|_{X_0}^{\theta}.
\]
\end{lemma}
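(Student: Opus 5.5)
Starting from Theorem~\ref{thm3}, I will replace the data norm $\|u(\cdot,T)\|_{H^{2\beta}}+\|\partial_t u(\cdot,T)\|_{H^\beta}$, which is the $Z$-norm of $\mathcal{F}_{t_0}v_0$, by the weaker $X_0$-norm. This costs a further reduction of the Hölder exponent, and the tool is interpolation. In the setting of Theorem~\ref{thm:main} ($\beta=\delta$, $a=0$), Lemma~\ref{semigroup} with $A=B=(-\Delta)^\beta$ shows that the semigroup is analytic. Hence the terminal state $\mathcal{F}_{t_0}v_0=(u(\cdot,T),\partial_t u(\cdot,T))$ is smoother than $Z$. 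Concretely, I will claim that for some $\epsilon>0$
\[
\|u(\cdot,T)\|_{H^{2\beta+2\epsilon}(\Omega)}+\|\partial_t u(\cdot,T)\|_{H^{\beta+2\epsilon}(\Omega)}\le C_\epsilon(T-t_0)\,\|v_0\|_{X}\le C_\epsilon M .
\]
This follows from the standard analytic-semigroup bound $\|\mathcal{A}^{\epsilon/\beta}S(T-t_0)\|\le C(T-t_0)^{-\epsilon/\beta}$, together with the a priori bound (Sobolev embedding $H^1(0,T;\cdot)\subset C([0,T];\cdot)$ gives $\|v_0\|_X\lesssim M$ modulo the $H^\beta$ part of $\partial_t u$, which I would also take from the semigroup smoothing).

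Next I interpolate between $X_0$ and this smoother space. On the scale generated by the Dirichlet fractional Laplacian, the $H^{2\beta}$-norm sits between $L^2$ and $H^{2\beta+2\epsilon}$ with weight $\frac{\beta}{\beta+\epsilon}$. That gives
\[
\|u(\cdot,T)\|_{H^{2\beta}}\le C\|u(\cdot,T)\|_{L^2}^{\frac{\epsilon}{\beta+\epsilon}}\|u(\cdot,T)\|_{H^{2\beta+2\epsilon}}^{\frac{\beta}{\beta+\epsilon}}.
\]
The velocity component is treated the same way, with exponent $\frac{2\epsilon}{\beta+2\epsilon}\ge\frac{\epsilon}{\beta+\epsilon}$. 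Since the data norm is bounded by $CM$, I can lower that exponent to the common value $\frac{\epsilon}{\beta+\epsilon}$ at the price of a power of $M$. Altogether,
\[
\|\mathcal{F}_{t_0}v_0\|_Z\le C M^{\frac{\beta}{\beta+\epsilon}}\|\mathcal{F}_{t_0}v_0\|_{X_0}^{\frac{\epsilon}{\beta+\epsilon}} .
\]

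Inserting this into Theorem~\ref{thm3} yields
\[
\|v_0\|_{X_0}\le CM^{1-\theta_1}\bigl(M^{\frac{\beta}{\beta+\epsilon}}\|\mathcal{F}_{t_0}v_0\|_{X_0}^{\frac{\epsilon}{\beta+\epsilon}}\bigr)^{\theta_1}=CM^{1-\theta}\|\mathcal{F}_{t_0}v_0\|_{X_0}^{\theta},
\]
with $\theta=\theta_1\frac{\epsilon}{\beta+\epsilon}\in(0,1)$, which matches the exponent in Theorem~\ref{thm:main}. The powers of $M$ combine because $(1-\theta_1)+\theta_1\frac{\beta}{\beta+\epsilon}=1-\theta$.

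The main obstacle is the first step: making the smoothing estimate rigorous. This needs the generator's fractional power domains to be identified with the Sobolev spaces $H^{2\beta+2\epsilon}\times H^{\beta+2\epsilon}$ under the volume constraint, and the smoothing constant to be controlled uniformly in terms of $M$, with $\epsilon$ limited by boundary regularity. I would first work on the spectral scale $D((-\Delta)^{s})$, where both the analytic smoothing and the interpolation are exact, and only at the end invoke equivalence with the $H^s$-norms for the admissible range of $\epsilon$.
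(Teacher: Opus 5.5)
\textbf{Your route is the paper's, and it inherits the same gap.} Like the paper, you use a smoothing bound for $\mathcal{F}_{t_0}$ into $Z_\epsilon$, then interpolation between $X_0$ and $Z_\epsilon$, then substitution into Theorem~\ref{thm3}. Your choice of $H^{\beta+2\epsilon}$ for the velocity, with the exponent then lowered using the bound $CM$, is a harmless variant of the paper's $H^{\beta+\epsilon}$. Your bookkeeping $(1-\theta_1)+\theta_1\frac{\beta}{\beta+\epsilon}=1-\theta$ is correct.

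The gap is the first step, which the paper's proof also only asserts. The bound $\|u(\cdot,T)\|_{H^{2\beta+2\epsilon}}\le C_\epsilon M$ is false, for every $\epsilon>0$, in exactly the regime where Lemma~\ref{semigroup} applies. Theorem~\ref{thm3} needs $\delta\le\beta$ and Lemma~\ref{semigroup} needs $\beta/\delta\le 1$, so you are forced to $A=B=(-\Delta)^\beta$. In the eigenbasis $(\mu_k,e_k)$ of $A$, each mode solves $u_k''+\mu_k u_k'+\mu_k u_k=0$, with roots
\[
r_\pm(\mu)=\tfrac12\bigl(-\mu\pm\sqrt{\mu^2-4\mu}\bigr),\qquad r_+(\mu)=-\frac{2\mu}{\mu+\sqrt{\mu^2-4\mu}}\in(-2,-1),\qquad r_-(\mu)\sim-\mu\quad(\mu>4).
\]
The slow branch does not damp high frequencies. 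For a solution of \eqref{eq-main} with $u_0=\sum_k a_ke_k$ one gets $u_k(T)=a_k\bigl(e^{r_+(\mu_k)T}+O(\mu_k^{-1})\bigr)$, and $e^{r_+(\mu_k)T}\ge e^{-2T}$. So $u(\cdot,T)$ is exactly as smooth as $u_0$. Choose $a_k$ with $\sum_k\mu_k^2|a_k|^2<\infty=\sum_k\mu_k^{2+2\epsilon/\beta}|a_k|^2$. This satisfies the a priori bound on the spectral scale you proposed to work on, while $u(\cdot,T)$ has infinite norm at order $2\beta+2\epsilon$.

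Analyticity is not the problem; the bound $\|\mathcal{A}^\kappa S(t)\|\le Ct^{-\kappa}$ does hold. However, $-1$ is an accumulation point of the spectrum of the generator, so the semigroup is not compact. Its fractional power domains contain all slow-branch states $\sum_k c_k(1,r_+(\mu_k))e_k$, so they do not embed into higher Sobolev spaces for the displacement. The identification you flagged as a technicality is therefore exactly where the argument fails, and the interpolation inequality has an infinite right-hand side.

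\textbf{What can work instead.} A H\"older estimate with $X_0$-data comes from the two-branch structure, not from smoothing. For $\mu_k$ large, write $u_k(t)=\alpha_ke^{r_+(t-t_0)}+\gamma_ke^{r_-(t-t_0)}$ and $V_k(t)=(u_k(t),u_k'(t))$. Inverting the $2\times2$ system at $t=T$, one can check two bounds:
\[
|\alpha_k|\le Ce^{2(T-t_0)}|V_k(T)|,\qquad \mu_k|\gamma_k|\le Ce^{|r_-|(T-t_0)}|V_k(T)|.
\]
So the slow branch is boundedly invertible and only the fast branch is ill-conditioned. The a priori bound on $[0,t_0]$ gives $\mu_k|\gamma_k|\le CM_ke^{-|r_-|t_0}$, with $\sum_kM_k^2\le CM^2$ and $|V_k(T)|\le CM_k$.

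Interpolating the two fast-branch bounds with $\theta=t_0/T$ cancels the exponentials. The finitely many low modes are handled by invertibility of the finite-dimensional flow. H\"older's inequality in $k$ then yields $\|v_0\|_{X_0}\le CM^{1-\theta}\|\mathcal{F}_{t_0}v_0\|_{X_0}^{\theta}$ directly, without going through Theorem~\ref{thm3} or the $Z$-norm. This modal (log-convexity) argument, or an equivalent one, is the missing ingredient. As written, your proof, like the paper's, rests on a false regularity claim.
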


\begin{proof}
Thanks to the analytic smoothing effect of the strongly damped wave equation (see Lemma \ref{semigroup}), the solution operator $v_0\mapsto u$ is analytic and maps $X$ into $H^1(t_0,T;H^{2\beta+2\epsilon}(\Omega))$ for any $\epsilon\ge0$ with a uniform bound depending only on $\|v_0\|_X$. Consequently, for the forward operator we have
\[
\|\mathcal{F}_{t_0}v_0\|_{Z_\epsilon} \le C_\epsilon \|v_0\|_X,\qquad \forall \epsilon\ge0,
\]
where $Z_\epsilon = H^{2\beta+2\epsilon}(\Omega)\times H^{\beta+\epsilon}(\Omega)$.

We now use interpolation between the weakest space $X_0$ and the higher regularity space $Z_\epsilon$. For the first component,
\[
\|\mathcal{F}_{t_0}v_0\|_{H^{2\beta}} \le C_{\mathrm{int}} \|\mathcal{F}_{t_0}v_0\|_{L^2}^{1-\gamma} \|\mathcal{F}_{t_0}v_0\|_{H^{2\beta+2\epsilon}}^{\gamma},
\]
with $\gamma = \frac{2\beta}{2\beta+2\epsilon} = \frac{\beta}{\beta+\epsilon}$. The same interpolation exponent holds for the second component $H^\beta$ between $L^2$ and $H^{\beta+\epsilon}$. Therefore,
\begin{equation}\label{interpolation}
    \|\mathcal{F}_{t_0}v_0\|_Z \le C_{\mathrm{int}} \|\mathcal{F}_{t_0}v_0\|_{X_0}^{1-\gamma} \|\mathcal{F}_{t_0}v_0\|_{Z_\epsilon}^{\gamma}
\le C_{\mathrm{int}} C_\epsilon^{\gamma} \|\mathcal{F}_{t_0}v_0\|_{X_0}^{1-\gamma} \|v_0\|_X^{\gamma}. 
\end{equation}

According to Theorem \ref{thm3}, we have
\[
\|v_0\|_{X_0} \le C_1 M^{1-\theta_1} \|\mathcal{F}_{t_0}v_0\|_Z^{\theta_1}.
\]
Inserting the interpolation bound \eqref{interpolation} yields
\[
\|v_0\|_{X_0} \le C_1 M^{1-\theta_1} \bigl(C_{\mathrm{int}} C_\epsilon^{\gamma} \|\mathcal{F}_{t_0}v_0\|_{X_0}^{1-\gamma} \|v_0\|_X^{\gamma}\bigr)^{\theta_1}
= C_2 M^{1-\theta_1} \|\mathcal{F}_{t_0}v_0\|_{X_0}^{\theta_1(1-\gamma)} \|v_0\|_X^{\theta_1\gamma}.
\]

For any $v$ arising as the difference of two states each satisfying the a priori bound $\|u\|_{H^1}\le M$, we have $\|v_0\|_X \le M$  hence
\[
\|v_0\|_{X_0} \le C_3 M^{1-\theta_1+\theta_1\gamma} \|\mathcal{F}_{t_0}v_0\|_{X_0}^{\theta_1(1-\gamma)}.
\]
Setting
\[
\theta = \theta_1(1-\gamma) = \theta_1\frac{\epsilon}{\beta+\epsilon}\in(0,1),
\]
we obtain
\[
\|v_0\|_{X_0} \le C M^{1-\theta} \|\mathcal{F}_{t_0}v_0\|_{X_0}^{\theta},
\]
which completes the proof of the lemma.
\end{proof}

Now let $v_0^\dagger\in X_0$ be the true intermediate state, corresponding to a solution $u^\dagger$ of \eqref{eq-main}.  We assume that $u^\dagger$ possesses the natural regularity bound
\[
\|u^\dagger\|_{H^1(0,T;H^{2\beta}(\Omega))} \le M.
\]
Let $y^\sigma\in X_0$ be noisy observations satisfying
\[
\|y^\sigma - \mathcal{F}_{t_0}v_0^\dagger\|_{X_0} \le \sigma,\qquad \sigma>0.
\]
We consider the Tikhonov functional
\[
J_\alpha(v) = \|\mathcal{F}_{t_0}v_0 - y^\sigma\|_{X_0}^2 + \alpha\|v_0\|_{X_0}^2,\qquad \alpha>0,
\]
and let $v_0^{\alpha,\sigma}\in X_0$ denote its unique minimizer. Standard properties of Tikhonov regularization yield the following estimates.

\begin{lemma}
\label{lem:apriori}
Let $M_0 = \|v_0^\dagger\|_{X_0}$. Then
\[
\|\mathcal{F}_{t_0}v_0^{\alpha,\sigma} - y^\sigma\|_{X_0} \le \sigma + \sqrt{\alpha}M_0,\qquad
\|v_0^{\alpha,\sigma}\|_{X_0} \le \sqrt{\frac{\sigma^2}{\alpha}+M_0^2}.
\]
Consequently,
\begin{equation}\label{5.13}
    \|\mathcal{F}_{t_0}v_0^{\alpha,\sigma} - \mathcal{F}_{t_0}v_0^\dagger\|_{X_0} \le 2\sigma + \sqrt{\alpha}M_0.
\end{equation}
\end{lemma}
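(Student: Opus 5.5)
The plan is to use the minimizing property of $v_0^{\alpha,\sigma}$ directly: compare $J_\alpha$ at the minimizer with its value at the true state $v_0^\dagger$. Since $v_0^{\alpha,\sigma}$ minimizes $J_\alpha$, we have $J_\alpha(v_0^{\alpha,\sigma})\le J_\alpha(v_0^\dagger)$. The noise assumption gives $\|\mathcal{F}_{t_0}v_0^\dagger-y^\sigma\|_{X_0}\le\sigma$, so
\[
\|\mathcal{F}_{t_0}v_0^{\alpha,\sigma}-y^\sigma\|_{X_0}^2+\alpha\|v_0^{\alpha,\sigma}\|_{X_0}^2\le \sigma^2+\alpha M_0^2 .
\]

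Both stated bounds follow by dropping one nonnegative term on the left. Keeping only the residual term gives
\[
\|\mathcal{F}_{t_0}v_0^{\alpha,\sigma}-y^\sigma\|_{X_0}\le\sqrt{\sigma^2+\alpha M_0^2}\le\sigma+\sqrt{\alpha}M_0,
\]
where the last step uses $\sqrt{a^2+b^2}\le a+b$ for $a,b\ge0$. Keeping only the penalty term and dividing by $\alpha>0$ gives
\[
\|v_0^{\alpha,\sigma}\|_{X_0}^2\le \frac{\sigma^2}{\alpha}+M_0^2,
\]
and taking square roots yields the second bound.

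For \eqref{5.13}, I will insert $y^\sigma$ and apply the triangle inequality in $X_0$:
\[
\|\mathcal{F}_{t_0}v_0^{\alpha,\sigma}-\mathcal{F}_{t_0}v_0^\dagger\|_{X_0}\le \|\mathcal{F}_{t_0}v_0^{\alpha,\sigma}-y^\sigma\|_{X_0}+\|y^\sigma-\mathcal{F}_{t_0}v_0^\dagger\|_{X_0}\le(\sigma+\sqrt\alpha M_0)+\sigma .
\]
This is exactly the claimed $2\sigma+\sqrt{\alpha}M_0$.

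There is no real obstacle in this argument. The one point to check is that the minimizer exists, is unique, and lies in $X_0$, so that comparing with the admissible element $v_0^\dagger\in X_0$ is legitimate. Existence and uniqueness hold because $J_\alpha$ is a strictly convex, coercive, continuous quadratic functional on the Hilbert space $X_0$, given that $\mathcal{F}_{t_0}$ is bounded and linear when viewed from $X_0$ to $X_0$. I will take this standard fact as given, as the paper does when it introduces $v_0^{\alpha,\sigma}$.
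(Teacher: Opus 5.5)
Your proposal is correct and follows the paper's proof essentially step for step. You compare $J_\alpha$ at the minimizer with its value at $v_0^\dagger$, drop one nonnegative term at a time using $\sqrt{a^2+b^2}\le a+b$, and close with the triangle inequality through $y^\sigma$. Your remark on existence and uniqueness of the minimizer in $X_0$ is a reasonable caveat that the paper likewise takes for granted.
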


\begin{proof}
Since $v_0^{\alpha,\sigma}$ minimizes $J_\alpha(v)=\|\mathcal{F}_{t_0}v-y^\sigma\|_{X_0}^2+\alpha\|v\|_{X_0}^2$, we have $J_\alpha(v_0^{\alpha,\sigma})\le J_\alpha(v_0^\dagger)$. Using $\|y^\sigma-\mathcal{F}_{t_0}v_0^\dagger\|_{X_0}\le\sigma$ and $\|v_0^\dagger\|_{X_0}=M_0$, we obtain
\[
\|\mathcal{F}_{t_0}v_0^{\alpha,\sigma} - y^\sigma\|_{X_0}^2 + \alpha\|v_0^{\alpha,\sigma}\|_{X_0}^2
\le \sigma^2 + \alpha M_0^2.
\]
The first two inequalities follow by dropping one nonnegative term on the left and using $\sqrt{a^2+b^2}\le a+b$ for $a,b\ge0$. For \eqref{5.13}, apply the triangle inequality:
\[
\|\mathcal{F}_{t_0}v_0^{\alpha,\sigma} - \mathcal{F}_{t_0}v_0^\dagger\|_{X_0}
\le \|\mathcal{F}_{t_0}v_0^{\alpha,\sigma} - y^\sigma\|_{X_0} + \|y^\sigma - \mathcal{F}_{t_0}v_0^\dagger\|_{X_0}
\le (\sigma+\sqrt{\alpha}M_0) + \sigma = 2\sigma+\sqrt{\alpha}M_0.
\]
We complete the proof of the lemma.
\end{proof}

\begin{proof}[\bf Proof of Theorem \ref{thm:main}]
Let $u^{\alpha,\sigma}$ be the solution of \eqref{eq-main} with initial state $v_0^{\alpha,\sigma}$ at time $t_0$. From the smoothing effect (Lemma \ref{semigroup}) of the forward problem we have
\[
\|u^{\alpha,\sigma}\|_{H^1(t_0,T;H^{2\beta})} \le C(\beta)\|v_0^{\alpha,\sigma}\|_{X_0}.
\]
Using the bound on $\|v_0^{\alpha,\sigma}\|_{X_0}$ from Lemma~\ref{lem:apriori} and the parameter choice $\alpha=c\sigma^2$,
\[
\|v_0^{\alpha,\sigma}\|_{X_0} \le \sqrt{1/c + M_0^2}.
\]
By the trace theorem, $M_0 = \|v_0^\dagger\|_{X_0} \le M$. Hence the right‑hand side is bounded by a constant independent of $\sigma$, and we obtain
\[
\|u^{\alpha,\sigma}\|_{H^1(t_0,T;H^{2\beta})} \le C_1,
\]
for some $C_1>0$.

The difference $w = u^{\alpha,\sigma} - u^\dagger$ solves the homogeneous equation with initial condition $v_0^{\alpha,\sigma}-v_0^\dagger$. By linearity,
\[
\|w\|_{H^1(t_0,T;H^{2\beta})}
\le \|u^{\alpha,\sigma}\|_{H^1(t_0,T;H^{2\beta})} + \|u^\dagger\|_{H^1(t_0,T;H^{2\beta})}
\le C_1 + M =: C_2.
\]
Applying Lemma~\ref{lem:stabilityX0} to $w$ (which is possible because $w$ satisfies the same equation and the required norm bound) yields

\begin{equation}\label{residualestimate}
    \|v_0^{\alpha,\sigma} - v_0^\dagger\|_{X_0}
\le C_0 C_2^{1-\theta} \|\mathcal{F}_{t_0}(v_0^{\alpha,\sigma}-v_0^\dagger)\|_{X_0}^{\theta}. 
\end{equation}

From the residual estimate \eqref{residualestimate} and $\alpha=c\sigma^2$ we have
\[
\|\mathcal{F}_{t_0}(v_0^{\alpha,\sigma}-v_0^\dagger)\|_{X_0} \le 2\sigma + \sqrt{c}M_0\,\sigma \le (2 + \sqrt{c}\,C_{\mathrm{trace}} M)\sigma.
\]
Substituting this into (4) gives
\[
\|v_0^{\alpha,\sigma} - v_0^\dagger\|_{X_0}
\le C_0 C_2^{1-\theta} \bigl(2 + \sqrt{c}\,C_{\mathrm{trace}} M\bigr)^{\theta} \sigma^{\theta}.
\]
Choosing $C = C_0 C_2^{1-\theta} (2 + \sqrt{c}\,C_{\mathrm{trace}} M)^{\theta}$ completes the proof of the theorem.
\end{proof}
\begin{remark}
The exponent $\theta =  \frac{\epsilon \theta_1}{\beta+\epsilon}$ explicitly shows how the smoothing effect (parametrized by $\epsilon$) improves the conditional stability: larger $\epsilon$ (i.e., higher regularity gain) makes $\theta$ closer to $\theta_1$, yielding a faster convergence rate $\sigma^{\theta}$. The constant $C_0$ depends on $\epsilon$ through $C_\epsilon$ and may grow as $\epsilon$ increases; in practice $\epsilon$ is fixed once the desired convergence rate is chosen.
\end{remark}

\section{Numerical Experiments}\label{sec6}

To test the reconstruction performance of the proposed method, we consider the computational domain $\Omega$ as a unit square and set the coefficient $a=1$ to simplify the governing equation. We conduct numerical experiments based on the Tikhonov regularization described above. As noted earlier, the backward problem studied here is to determine the state \(\phi=(u(\cdot,t_0),v(\cdot,t_0))^T\) at the intermediate time \(0 < t_0 < T\) from the final-time data or its noisy measurements. The noisy data are simulated as
\[
\phi_v^\delta = \phi_v(x) + \sigma \cdot \eta(x)\, \phi_v(x),\quad x\in\Omega,
\]
where \(\delta>0\) is the noise level and \(\eta(x)\) is a random variable uniformly distributed in \([-1,1]\).

Since it is difficult to construct an exact analytical solution for fractional-order partial differential equations, we employ the spectral method solution as the approximate exact solution in our experiments. This choice is motivated by the conditional stability analysis, which requires smooth observation data; moreover, the spectral method offers higher efficiency for fractional-order PDEs, whereas finite element methods are less efficient and more challenging for implementing fractional derivatives. 

\begin{example}\label{exp1}
    The fractional orders are chosen as $\gamma=\beta=\delta=0.8$. We initialize the numerical tests with the following initial condition for the displacement:
    $$
    u_0(x,y)=\sin(\pi x)\sin(\pi y).
    $$
\end{example}

To verify the robustness of the proposed regularization method, we perform numerical tests under four different noise levels: $\sigma=0$, $0.001$, $0.01$ and $0.1$, respectively. The regularization parameter is fixed at $\alpha=10^{-5}$ for all test cases. The spatial domain is discretized with a grid of $2500$ points, and the time step size is set to $0.005$. The numerical results of the forward problem are illustrated in Figure~\ref{fig:true_solution_u_sin}. The inversion results under different noise levels are displayed in Figure~\ref{fig:reconstruction_u_sin}, and the quantitative evaluation of the reconstruction accuracy is presented in Table~\ref{table1}.

\begin{figure}[ht]
    \centering
    \includegraphics[width=0.5\linewidth]{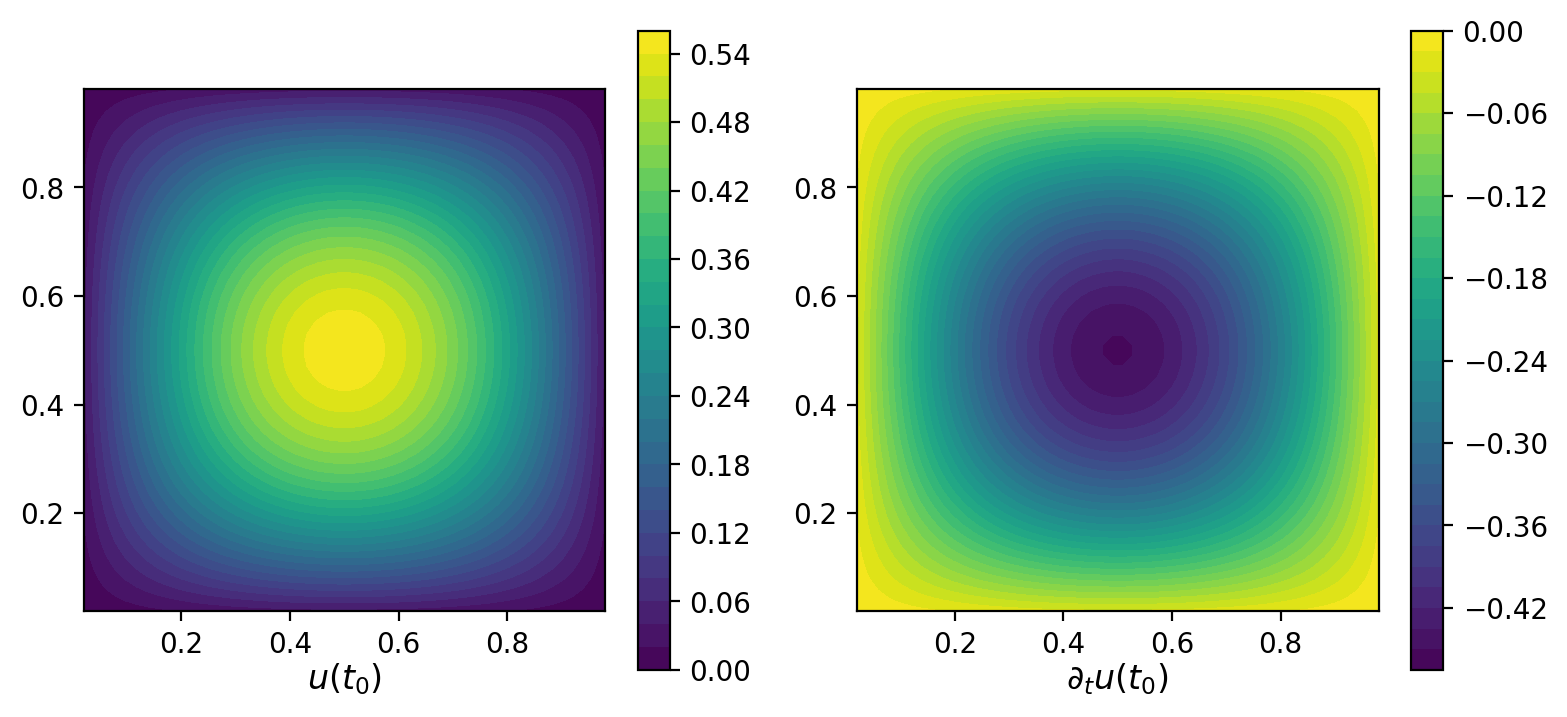}
    \caption{True $u(t_0)$ and $\partial_t u(t_0)$ when $t_0=0.8$}
    \label{fig:true_solution_u_sin}
\end{figure}

\begin{figure}[ht]
    \centering
    \includegraphics[width=1\linewidth]{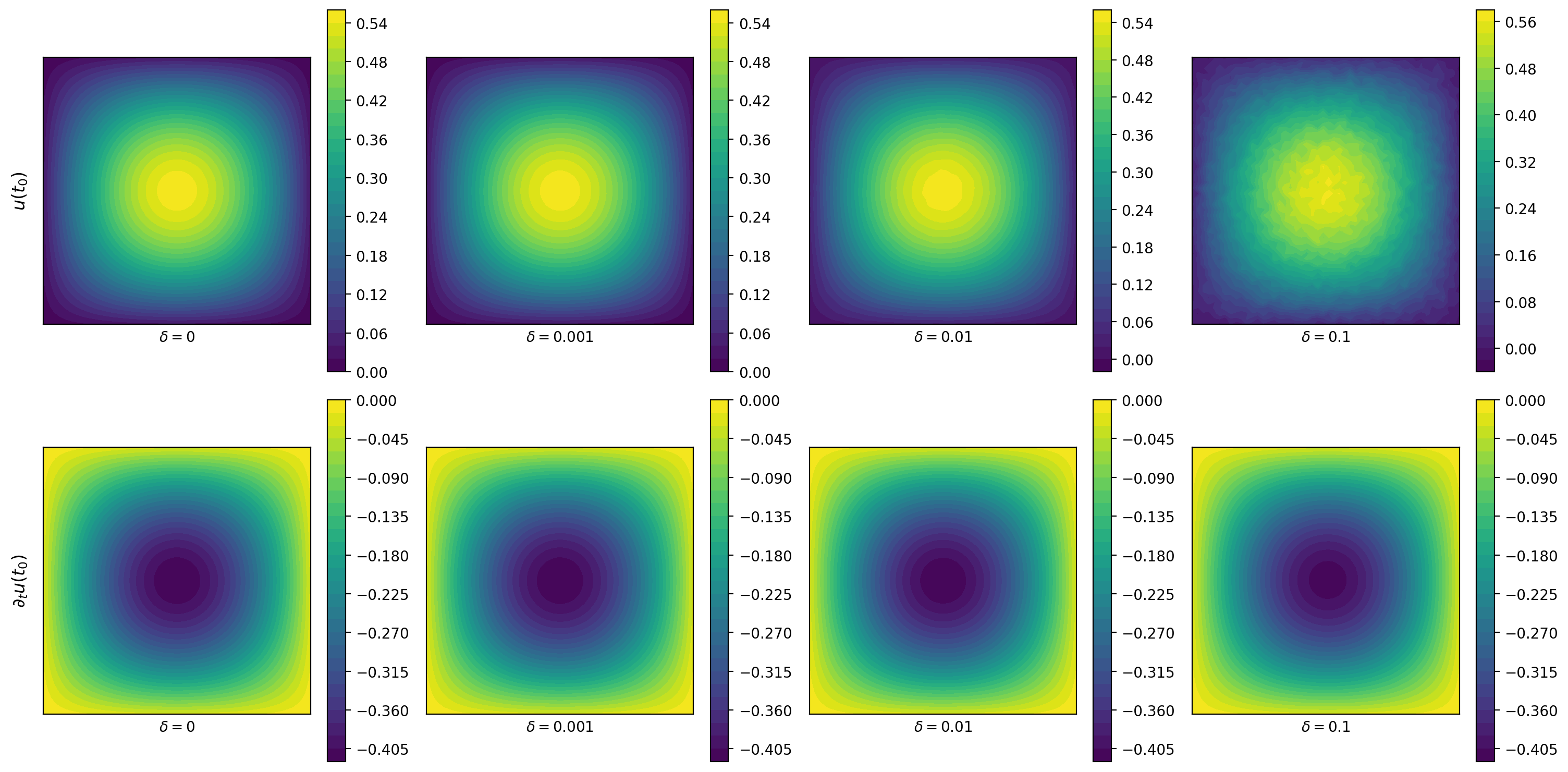}
    \caption{The reconstructions of $u(t_0)$ and $\partial_t u(t_0)$ under $\sigma=0,0.001,0.01,0.1$ when $t_0=0.8$}
    \label{fig:reconstruction_u_sin}
\end{figure}

\begin{table}[ht]
    \centering
    \caption{Numerical errors under different noise levels}
    \begin{tabular}{lcccc}
        \toprule
        $\sigma$ & \textbf{0} & \textbf{0.001} & \textbf{0.01} & \textbf{0.1} \\
        \midrule
        Error in $u(t_0)$ & $2.1262\times 10^{-3}$ & $2.1366\times 10^{-3}$ & $3.9080\times 10^{-3}$ & $3.3806\times 10^{-2}$ \\
        Error in $\partial_t u(t_0)$ & $7.0512\times 10^{-2}$ & $7.0628\times 10^{-2}$ & $7.1672\times 10^{-2}$ & $8.2700\times 10^{-2}$ \\
        \bottomrule
    \end{tabular}
    \label{table1}
\end{table}

The results demonstrate the robustness of our proposed algorithm against noise. It can be observed that accurate reconstructions can be obtained even when the noise level is as high as $0.1$. However, we note that the reconstruction accuracy is highly dependent on the choice of regularization parameters, especially for the velocity component. In particular, the reconstruction accuracy deteriorates significantly when the regularization parameter is set to $1\times 10^{-3}$ or $1\times 10^{-5}$ under noise level $\sigma=0.01$, as detailed in Table \ref{table4}. This observation further confirms the severe ill-posedness of the backward problem. In the subsequent examples, we will test more high-frequency functions and investigate the effects of the fractional order and the initial time $t_0$ on the reconstruction errors.
\begin{table}[ht]
  \centering
  \caption{Numerical errors under different regularization parameters $\alpha$}
  \begin{tabular}{lccc}
    \toprule
     $\alpha$& \textbf{$10^{-6}$} & \textbf{$10^{-4}$} & \textbf{$10^{-3}$} \\
    \midrule
    Error in $u(t_0)$ & $3.78\times 10^{-3}$ & $1.39\times 10^{-2}$ & $4.06\times 10^{-2}$ \\
    Error in $\partial_t u(t_0)$ & $8.40\times 10^{-2}$ & $4.38\times 10^{-1}$ & $9.15\times 10^{-1}$ \\
    \bottomrule
    \label{table4}
  \end{tabular}
\end{table}

\begin{example}
    In this example, we set $a=0$ and select the parameter pairs $(\delta,\beta)$ as $(0.2,0.2)$, $(0.4,0.4)$, $(0.6,0.6)$ and $(0.8,0.8)$ to test the impact of the fractional orders. We perform numerical experiments with the following initial displacement condition:
    \begin{equation}
        u_0(x,y)=\sin(2\pi x)\sin(\pi y).
    \end{equation}
    The noise level is fixed at $\sigma=0.01$, and we choose $T=0.5$ and $t_0=0.4$. All other settings are identical to those in Example~\ref{exp1}. The exact solution is presented in Figure~\ref{fig:true_solution2}, and the corresponding reconstructed results are shown in Figure~\ref{fig:reconstruction_delta_beta}. The detailed numerical errors are summarized in Table~\ref{table2}.
\end{example}

\begin{figure}[ht]
    \centering
    \includegraphics[width=0.5\linewidth]{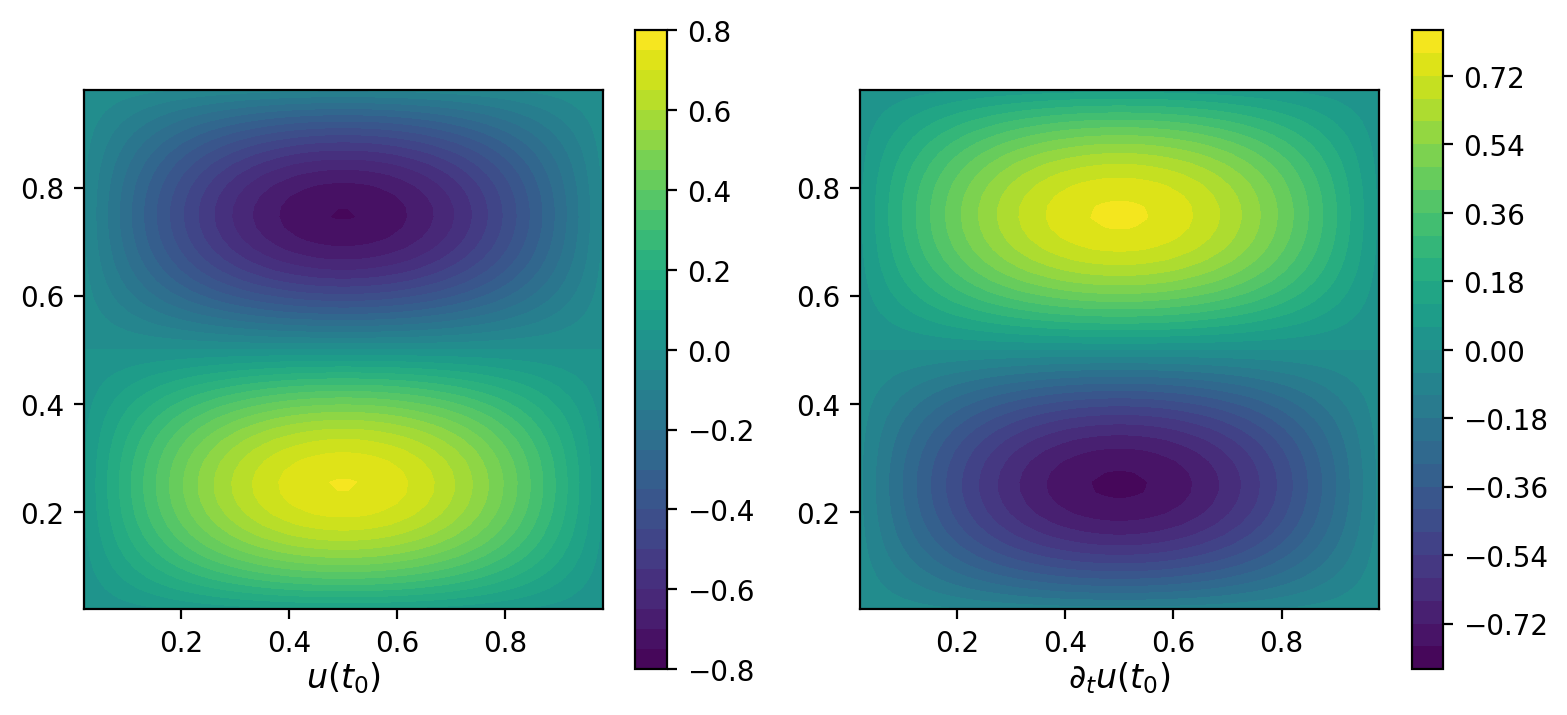}
    \caption{True $u(t_0)$ and $\partial_t u(t_0)$ when $\delta,\beta=0.4$}
    \label{fig:true_solution2}
\end{figure}

\begin{figure}[ht]
    \centering
    \includegraphics[width=1\linewidth]{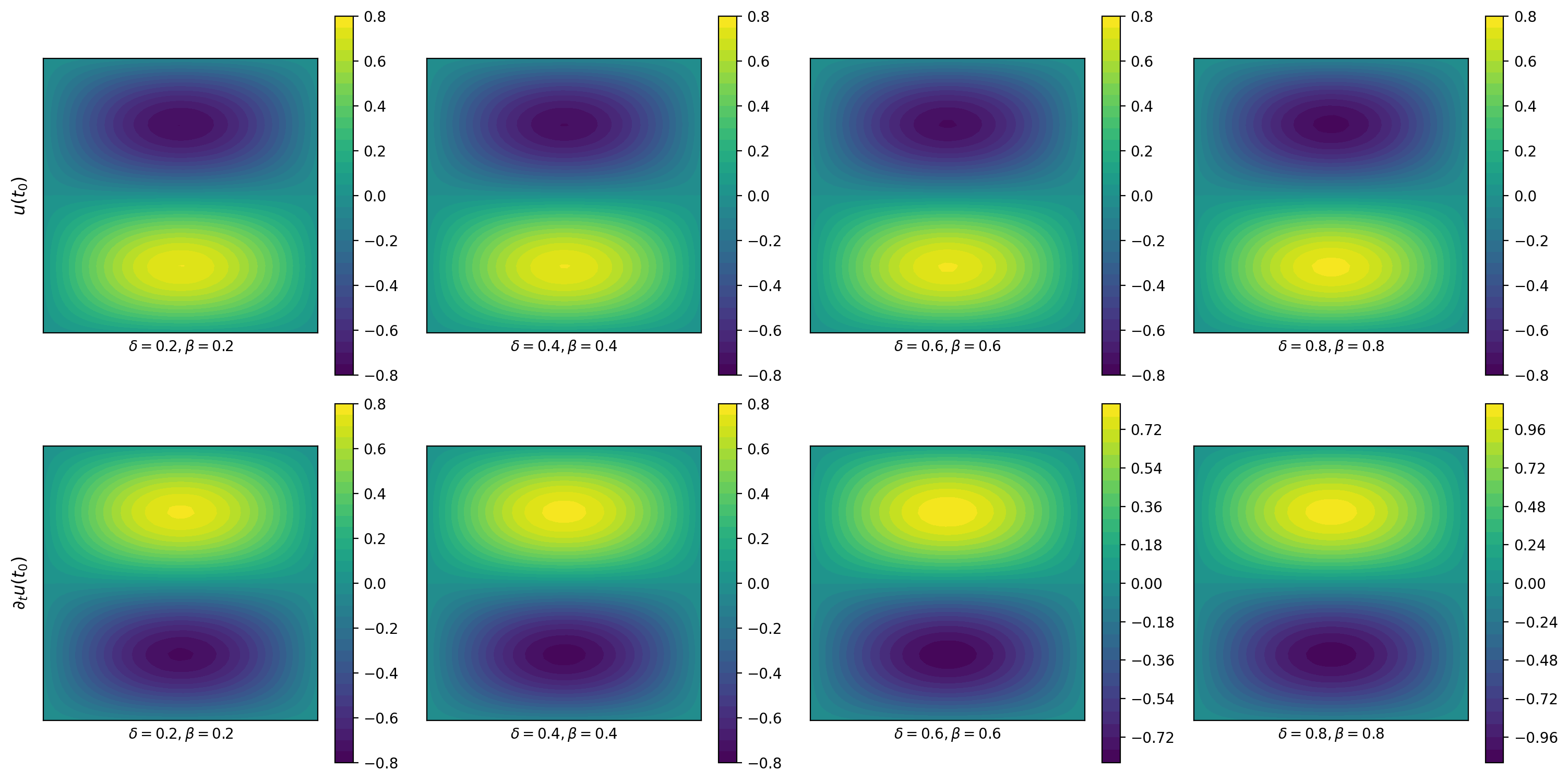}
    \caption{The reconstructions of $u(t_0)$ and $\partial_t u(t_0)$ with different $\beta$ and $\delta$ when $t_0=0.4$}
    \label{fig:reconstruction_delta_beta}
\end{figure}

\begin{table}[ht]
    \centering
    \caption{Numerical errors under different $(\delta, \beta)$ parameters (transposed)}
    \begin{tabular}{lcccc}
        \toprule
        $(\delta,\beta)$ & \textbf{$(0.2, 0.2)$} & \textbf{$(0.4, 0.4)$} & \textbf{$(0.6, 0.6)$} & \textbf{$(0.8, 0.8)$} \\
        \midrule
        Error in $u(t_0)$ & $2.7565\times 10^{-3}$ & $1.5546\times 10^{-3}$ & $2.4377\times 10^{-3}$ & $1.8518\times 10^{-2}$ \\
        Error in $\partial_t u(t_0)$ & $4.1229\times 10^{-2}$ & $2.3122\times 10^{-2}$ & $4.0241\times 10^{-2}$ & $3.6876\times 10^{-1}$ \\
        \bottomrule
    \end{tabular}
    \label{table2}
\end{table}

The results show that the reconstruction errors increase monotonically as the spatial fractional order increases. This is consistent with the theoretical results in the \cite{SONG2024177}: as the fractional order becomes larger, the high-frequency components are amplified more significantly, rendering the problem more severely ill-posed. This effect is particularly pronounced for the velocity component. In the following, we present the influence of the initial time $t_0$ on the reconstruction results to further validate our theoretical analysis.
\begin{example}
    According to the conditional stability analysis, the exponent $\theta$ of the stability is closely related to the time $t_0$. As $t_0$ tends $0$, the value of $\theta$ decreases gradually, indicating that the backward problem becomes more severely ill-posed. To verify our theoretical findings, we test the inversion accuracy of our proposed algorithm at different initial times $t_0$.

    In this numerical experiment, the initial displacement is set as
    $$
    u_0(x,y) = \sin(2\pi x)\sin(2\pi y),
    $$
    which is a relatively high-frequency function. Compared with the previous two examples, this problem is more challenging. For simplicity, we set $a=0$ and $\delta=\beta=0.5$. The noise level is fixed at $\sigma=0.01$, and all other settings remain consistent with those in Example~\ref{exp1}.

    To illustrate the exact solution, we present the solution at $t_0=0.2$ in Figure~\ref{fig:true_u3}. We perform inversions for $t_0=0.1, 0.2, 0.3$ and $0.4$, respectively. The reconstructed results are shown in Figure~\ref{fig:recovered_u3}, and the detailed relative errors are summarized in Table~\ref{table3}. To illustrate the dependence of the relative reconstruction errors on the initial time $t_0$, we present a line plot in Figure~\ref{fig:error_vs_t0}.
\end{example}

\begin{figure}[ht]
    \centering
    \includegraphics[width=0.5\linewidth]{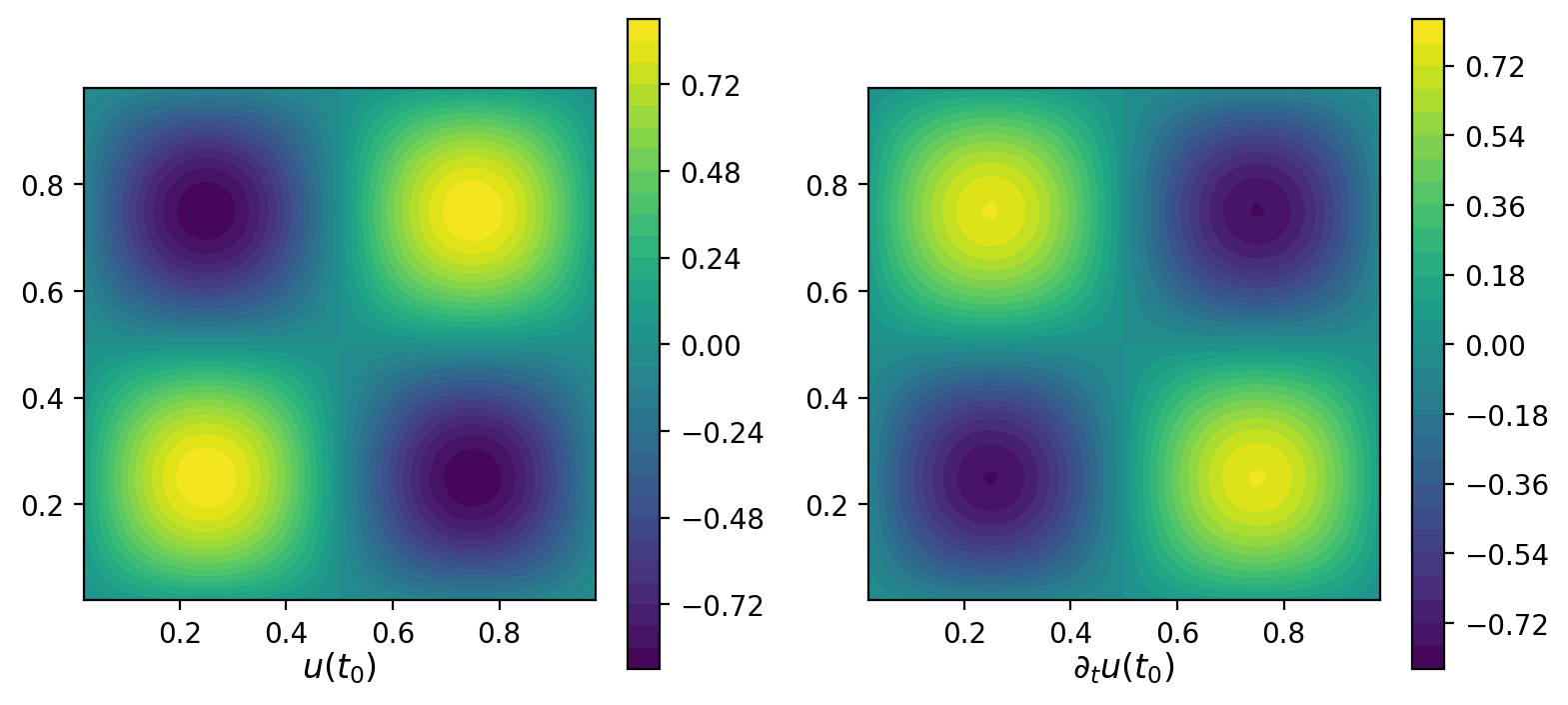}
    \caption{True $u(t_0)$ and $\partial_t u(t_0)$ when $t_0=0.2$}
    \label{fig:true_u3}
\end{figure}

\begin{figure}[ht]
    \centering
    \includegraphics[width=1\linewidth]{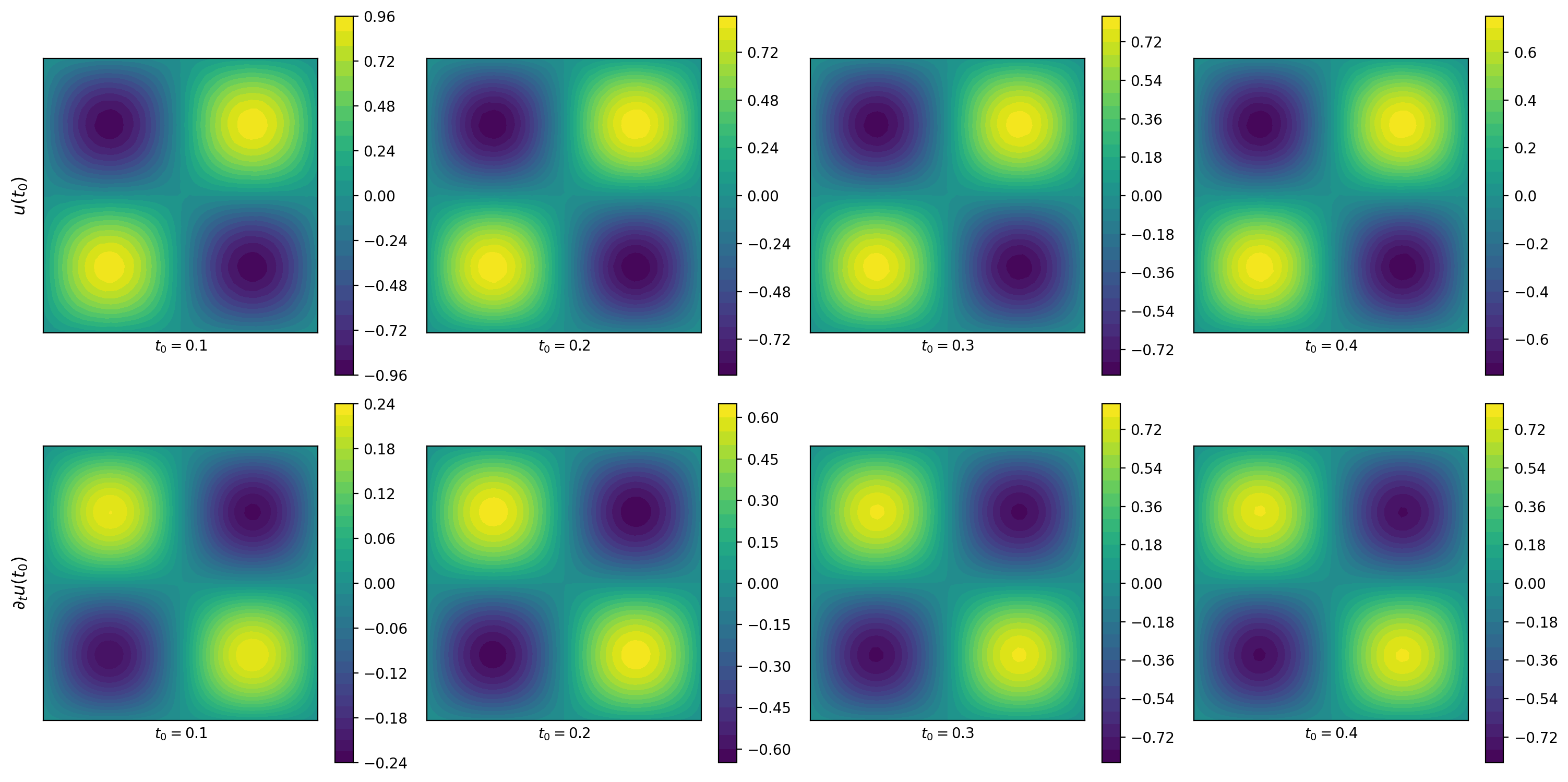}
    \caption{The reconstructions of $u(t_0)$ and $\partial_t u(t_0)$ with $t_0=0.1,0.2,0.3,0.4$}
    \label{fig:recovered_u3}
\end{figure}

\begin{table}[ht]
    \centering
    \caption{Numerical errors at different initial times $t_0$}
    \begin{tabular}{lcccc}
        \toprule
        & \textbf{$t_0=0.1$} & \textbf{$t_0=0.2$} & \textbf{$t_0=0.3$} & \textbf{$t_0=0.4$} \\
        \midrule
        Error in $u(t_0)$ & $3.5845\times 10^{-2}$ & $1.2523\times 10^{-2}$ & $6.5248\times 10^{-3}$ & $6.6968\times 10^{-3}$ \\
        Error in $\partial_t u(t_0)$ & $6.1160\times 10^{-1}$ & $1.9312\times 10^{-1}$ & $3.8288\times 10^{-2}$ & $9.6682\times 10^{-3}$ \\
        \bottomrule
    \end{tabular}
    \label{table3}
\end{table}

\begin{figure}[ht]
    \centering
    \includegraphics[width=0.5\linewidth]{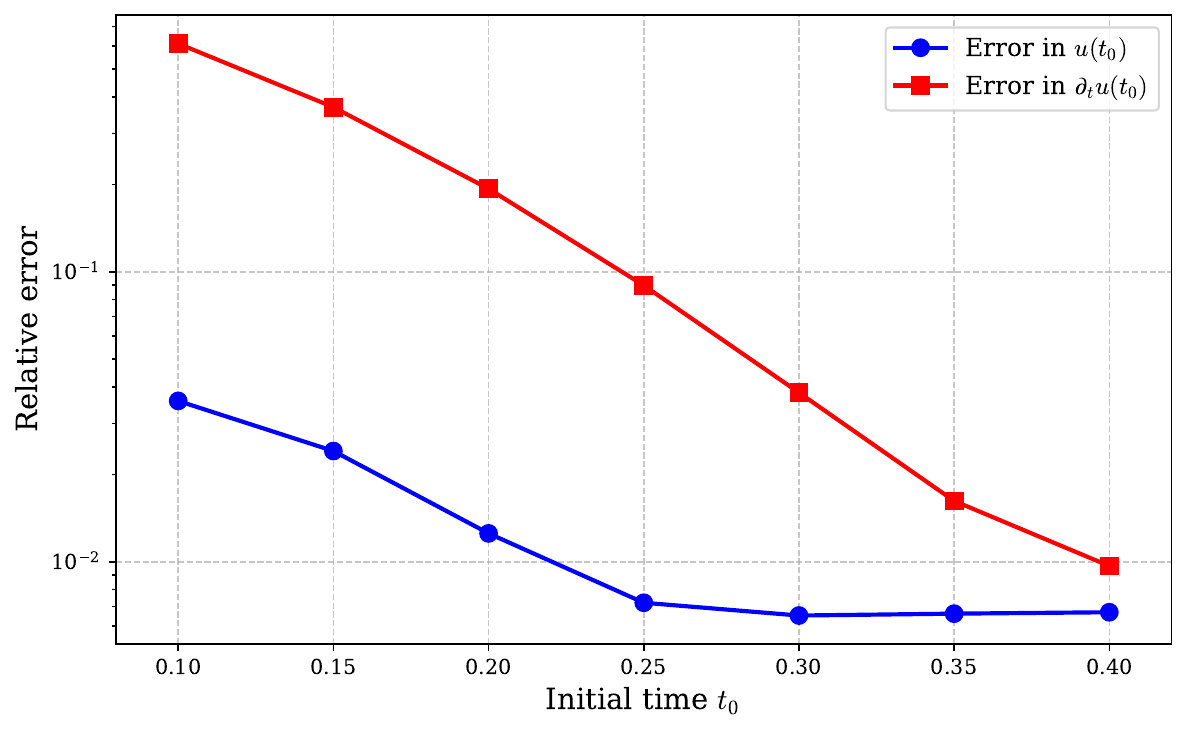}
    \caption{Relation between $t_0$ and the relative error}
    \label{fig:error_vs_t0}
\end{figure}
The results demonstrate that as $t_0$ approaches 0, the reconstruction errors increase significantly, indicating that the problem becomes more severely ill-posed. This is in perfect agreement with our theoretical findings. It can be observed that when $t_0=0.1$, the reconstruction error for the velocity component is extremely large, leading to a significant deviation from the exact solution.

This behavior is analogous to that of parabolic equations, where it is notoriously difficult to reconstruct solutions near the initial time $t=0$ from data at the final time $T$. In contrast, the displacement component can usually be reconstructed with relatively high accuracy. This observation is consistent with the previous two examples, indicating that velocity reconstruction is inherently more challenging. However, a rigorous theoretical explanation for why velocity reconstruction is more difficult than displacement reconstruction is beyond the scope of our current work and will be investigated in our future research.
\section{Concluding remarks}\label{sec7}
In this work, we established a Carleman estimate for a strongly damped wave equation
involving fractional Laplacians \((-\Delta)^\beta\) and \((-\Delta)^\delta\) (\(\delta\le\beta\)).
Using the time‑exponential weight \(\varphi(t)=e^{\lambda t}\) and an integral inequality, we proved a conditional stability for the backward problem, with the stability exponent quantifying the ill‑posedness.  Based on this stability estimate, we
constructed a Tikhonov regularization functional together with the corresponding adjoint system, and derived a convergence rate for the proposed algorithm. Numerical experiments are presented, confirming the effectiveness of the theoretical results.

Future work will extend this framework to equations with time‑fractional damping (e.g., Caputo derivatives).  By combining the spatial fractional Carleman
estimate developed here with time‑fractional Carleman estimates from
\cite{Huang2019}, we aim to obtain conditional stability and
regularization schemes for backward problems involving simultaneous fractional
derivatives in both space and time.
\bibliographystyle{plain}
\bibliography{refs}
\end{document}